\documentclass[11pt]{amsart}
\usepackage{tabularx,booktabs,tikz}
\usepackage{caption}
\usepackage{amsmath}
\usepackage{amsfonts}

\usepackage{amscd}
\usepackage{amsthm}
\usepackage{amssymb} 
\usepackage{latexsym}
\usepackage{eufrak}
\usepackage{euscript}
\usepackage{epsfig}
\usepackage{graphics}
\usepackage{array}
\usepackage{enumerate}
\usepackage{dsfont}
\usepackage{color}
\usepackage{wasysym}
\usepackage{hyperref}
\usepackage{pdfsync}

\newcommand{\Hmm}[1]{\leavevmode{\marginpar{\tiny%
$\hbox to 0mm{\hspace*{-0.5mm}$\leftarrow$\hss}%
\vcenter{\vrule depth 0.1mm height 0.1mm width \the\marginparwidth}%
\hbox to
0mm{\hss$\rightarrow$\hspace*{-0.5mm}}$\\\relax\raggedright #1}}}

\newtheorem{theorem}{Theorem}[section]
\newtheorem{lemma}[theorem]{Lemma}
\newtheorem{corollary}[theorem]{Corollary}
\newtheorem{definition}[theorem]{Definition}

\newtheorem{remark}[theorem]{Remark}

\newtheorem{proposition}[theorem]{Proposition}

\begin{document}

\title[Ground state solutions to some Indefinite Nonlinear Schr\"{o}dinger equations on lattice graphs]{Ground state solutions to some indefinite variational problems on lattice graphs}

%\author{Bobo Hua}
%\address{Bobo Hua: School of Mathematical Sciences, LMNS, Fudan University, Shanghai 200433, China; ; Shanghai Center for Mathematical Sciences, Jiangwan Campus, Fudan University, No. 2005 Songhu Road, Shanghai 200438, China.}
%\email{bobohua@fudan.edu.cn}

\author{Wendi Xu}
\address{Wendi Xu: School of Mathematical Sciences, Fudan University, Shanghai 200433, China}
\email{wdxu19@fudan.edu.cn}

\begin{abstract}
In this paper, we consider the Schrödinger type equation 
$-\Delta u+V(x)u=f(x,u)$ on the lattice graph $\mathbb{Z}^{N}$ with indefinite variational functional, where $-\Delta$ is the discrete Laplacian. Specifically, we assume that $V(x)$ and $f(x,u)$ are periodic in $x$, $f$ satisfies some growth condition and 0 lies in a spectral gap of $(-\Delta + V)$. We obtain ground state solutions by using the method of generalized Nehari manifold which has been introduced in \cite{MR2768820}. 

\end{abstract}
\par
\maketitle

\bigskip

\section{introduction}
Nowadays, mathematicians pay attention to the analysis on graphs which is important for practical applications. They have obtained various results analogous to the classical theory in Euclidean spaces and manifolds, see for examples \cite{MR1743100,MR3616731,MR3822363,MR4383783,MR3265330,MR3316971,MR3581303, MR3776357,MR4036571} and the references therein. Difference equations on graphs are discrete counterparts of partial differential equations. 
In \cite{MR3523107,MR3542963,MR3665801}, the authors use the variational methods to study Kazdan-Warner equations, Yamabe equations and Schr\"{o}dinger equations on locally finite graphs respectively. 
The Schr\"{o}dinger type equation 
\begin{equation}\label{eq}
	-\Delta u +V(x)u = f(x,u),\quad x\in \Omega,
\end{equation}
where $\Omega \subset \mathbb{R}^{n},\,n\geq 2$, has been extensively studied in the Euclidean spaces. See for examples \cite{MR709644, MR1163431, MR1162728, MR2271695, MR2557725, MR2873855} and the references therein. 
We are interested in the equation (\ref{eq}) with periodic potential, which has been studied in many papers, such as \cite{MR1181725, MR850686, MR1026427, MR1205391, MR2175045}.
 In \cite{MR2271695}, the authors consider two cases of potentials $V$; one is positive periodic and the other is positive and bounded. Using the Nehari method, they find ground state solutions for (\ref{eq}) without compact embeddings. The case when the operator $-\Delta+V$ is indefinite is considered in \cite{MR2557725}. The authors, via the method of generalized Nehari manifold, reduce the indefinite variational problem to a definite one and give a new characterization of the corresponding critical value. 
There have been many works studying Schr\"{o}dinger type equations on graphs as well. In \cite{MR3665801}, the authors obtain a strictly positive solution for the equation (\ref{eq}) on locally finite graphs with $V(x)\geq V_{0}>0$, $1/V \in L^{1}(\mathbb{V})$ and some additional assumptions. In \cite{MR3833747}, N. Zhang and L. Zhao  prove via the Nehari method that, provided $a^{-1}(0) \neq \emptyset$ and $a(x)\to \infty$ as $d(x,x_{0})\to\infty$ for fixed $x_{0}$, the equation (\ref{eq}) with $V(x)=\lambda a(x) +1$ admits a ground state solution $u_{\lambda}$ for every $\lambda >1$. Besides, $u_{\lambda}$ converges to a solution for the Dirichlet problem $-\Delta u + u = |u|^{p-1}u$ as $\lambda \to \infty$. For some other works related to  Schr\"{o}dinger equations on graphs we refer to \cite{MR4092834, MR4053610, MR3921107} for examples.
Note that both \cite{MR3665801} and \cite{MR3833747} require the potential $V(x)$ tending to infinity as $d(x,x_{0})\to \infty$ in order to
obtain compact Sobolev embeddings while we are concerned with periodic potentials in this paper.
\\ \hspace*{\fill}\\
We first introduce the basic setting of lattice graphs. Let $\mathbb{Z}^{N}=(\mathbb{V},\mathbb{E})$ be a lattice graph with the set of vertices
$$ \mathbb{V}:=\{x=(x_{1},\cdots,x_{N}):x_{i}\in \mathbb{Z}, 1 \leq i \leq N\}$$ and the set of edges
$$
\mathbb{E}:=
\Big\{
 \{ x,y\}: x,y\in\mathbb{V},\sum_{i=1}^{N}|x_{i}-y_{i}|=1
 \Big\}.
 $$ 
Two vertices $x$ and $y$ are called neighbours, denoted by $x\sim y$, if there is an edge $ \{ x,y\}\in \mathbb{E}$.
 Let $w:\mathbb{E} \to \mathbb{R}^{+}$ and $\mu: \mathbb{V} \to \mathbb{R}^{+}$ be the measures on the edge set and the vertex set respectively. In this paper, we consider the lattice graph with unit weight, i.e.,
\begin{displaymath}
	\begin{aligned}
		&w_{xy} = 1, \quad \forall  \, \{x,y\} \in \mathbb{E} , \\
		&\mu(x)=1,\quad \forall  \, x\in \mathbb{V}.
	\end{aligned}
\end{displaymath}
  For each $x\in \mathbb{V}$, the degree of $x$ is the number of its neighbours
 $	d_{x}:=\sum_{
	\substack{y\in\mathbb{V}\\
		y\sim x}
}  w_{xy}=2N.$
Denote the set of functions on $\mathbb{Z}^{N}$ by $C(\mathbb{V})$. Define the combinatorial Laplacian as, for any function $u\in C(\mathbb{V})$ and $x\in \mathbb{V}$,
\begin{displaymath}
		\Delta u(x) 
		%:= \frac{1}{\mu(x)} \sum_{y \sim x} w_{xy}\big(u(y)-u(x)\big)
		:=\sum_{y \sim x} \big(u(y)-u(x)\big).
\end{displaymath}
For any function $u,v\in C(\mathbb{V})$, define the associated gradient form as
\begin{displaymath}	
			\Gamma (u,v)(x) 
			:= \frac{1}{2} \sum_{y \sim x} \big(u(y)-u(x)\big)\big(v(y)-v(x)\big).
\end{displaymath}
%\begin{displaymath}
%	\begin{aligned}
%		\Gamma (u,v)(x) 
%		&:= \frac{1}{2\mu(x)} \sum_{y \sim x} 
% w_{xy}\big(u(y)-u(x)\big)\big(v(y)-v(x)\big)\\
%		&= \frac{1}{2} \sum_{y \sim x} %\big(u(y)-u(x)\big)\big(v(y)-v(x)\big).\\
%	\end{aligned}
%\end{displaymath}
Set $\Gamma(u) = \Gamma(u,u)$ and the length of its gradient reads
\begin{displaymath}
	| \nabla u |(x) := \sqrt{\Gamma (u)(x)} =
    \Big(
	\frac{1}{2} \sum_{y \sim x} \big(u(y)-u(x)\big)^{2} \Big)^{1/2}.
\end{displaymath}
For any function $f\in C(\mathbb{V})$, we write
\begin{displaymath}
	\int_{\mathbb{V}} f d \mu := \sum_{x\in \mathbb{V}} \mu(x)f(x) = \sum_{x\in \mathbb{V}} f(x) 
\end{displaymath}
whenever it makes sense.
Let $C_{c}(\mathbb{V})$ be the set of all functions with finite support, and $W^{1,2}(\mathbb{V})$ be the completion of $C_{c}(\mathbb{V})$ under the norm
\begin{displaymath}
	\|u\|_{W^{1,2}}:=
	 \Big( 
	\int_{\mathbb{V}} (| \nabla u|^{2} + u^{2} ) d\mu
	 \Big)^{1/2}.
\end{displaymath}
One can check that $W^{1,2}(\mathbb{V})$ is a Hilbert space with the inner product
\begin{equation}\label{inner p.}
 \langle u,v \rangle _{W^{1,2}} = 
 \int_{\mathbb{V}} (\Gamma(u,v)+uv) d\mu, 
 \quad \forall u,v \in W^{1,2}.
\end{equation}
Let $\ell^{p}(\mathbb{V},\mu), p\in[1,\infty],$ be the space of $\ell^{p}$ summable functions on $\mathbb{V}$ w.r.t. the measure $\mu$. We write $\|\cdot\|_{p}$ as the $\ell^{p}(\mathbb{V},\mu)$ norm, i.e., 
\begin{equation}\notag
	\|u\|_{p}:=
	\left\{ 
	\begin{aligned}
		& \big(  \int_{\mathbb{V}} |u(x)|^{p}  d\mu \big)^{\frac{1}{p}},\quad 1\leq p < \infty,\\
		&\sup_{x\in\mathbb{V}} |u(x)|,\quad \quad \quad\quad p=\infty.
	\end{aligned}
	\right.
\end{equation}
For $T\in \mathbb{N}$, $g\in C(\mathbb{V})$ is called T-periodic if $$g(x+Te_{i})=g(x),\quad \forall x\in\mathbb{V}, 1\leq i \leq N,$$ where $e_{i}$ is the unit vector in the i-th coordinate.
\\ \hspace*{\fill} \\
We are concerned with the existence of ground state solutions of the following Schr\"{o}dinger equation:
\begin{equation}\label{maineq}
	\left \{\begin{array}{ll}
		-\Delta u+V(x)u=f(x,u), & \\	
		u\in W^{1,2}(\mathbb{V}).&\,
	\end{array}\right. 
\end{equation}
For the Schr\"{o}dinger operator $-\Delta + V$, suppose that the following hold:\\
$(S_{1})$  $V(x)$ is a bounded T-periodic function and 0 lies in a gap of the spectrum of $-\Delta + V$ .\\
$(S_{2})$  $f:\mathbb{V}\times\mathbb{R}\to \mathbb{R}$  is continuous, T-periodic in $x$ and satisfies the growth condition 
\begin{equation}\label{g.c.}
	|f(x,u)| \leq a(|u|+|u|^{p-1}) \quad \textrm{ for some }  a>0 \textrm{ and } p>2.
\end{equation} 
$(S_{3})$ \,$f(x,u) = o(u)$ uniformly in $x$ as $u \to 0$.\\ 
$(S_{4})$  \,$ \frac{F(x,u)}{u^2} \to \infty $ uniformly in $x$ as $|u| \to \infty $ with $F(x,u):= \int_{0}^{u} f(x,s) \,d s $.\\
$(S_{5})$  \,$u\mapsto \frac{f(x,u)}{|u|}$ is strictly increasing on $(-\infty,0)$ and $(0,+\infty)$.\\ 

\begin{theorem} \label{thm1}
Let $\mathbb{Z}^{N}=(\mathbb{V},\mathbb{E})$ be a lattice graph with unit weight. Suppose that $(S_{1}-S_{5})$ hold. Then (\ref{maineq}) admits a ground state solution.
\end{theorem}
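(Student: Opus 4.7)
The plan is to adapt the generalized Nehari manifold method of Szulkin--Weth \cite{MR2768820} to the discrete lattice setting. Two features of $\mathbb{Z}^{N}$ simplify the framework: the bounded vertex degree gives $\|\Delta u\|_{2}\le 4N\|u\|_{2}$, so that $W^{1,2}(\mathbb{V})$ and $\ell^{2}(\mathbb{V})$ coincide as vector spaces with equivalent norms; and since $\mu\equiv 1$, one has $\|u\|_{p}\le\|u\|_{2}$ for every $p\ge 2$, making the embedding $W^{1,2}(\mathbb{V})\hookrightarrow\ell^{p}(\mathbb{V})$ immediate. By $(S_{1})$, $L:=-\Delta+V$ is a bounded self-adjoint operator on $\ell^{2}$ with $0$ in a spectral gap; let $E^{\pm}$ be the spectral subspaces for the positive/negative parts of $\sigma(L)$ and endow $E:=\ell^{2}(\mathbb{V})$ with the inner product $\langle u,v\rangle:=\langle |L|^{1/2}u,|L|^{1/2}v\rangle_{\ell^{2}}$, which is equivalent to (\ref{inner p.}) because $|L|$ is bounded above and bounded below away from $0$. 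The decomposition $E=E^{+}\oplus E^{-}$ is orthogonal in $\langle\cdot,\cdot\rangle$, and (\ref{maineq}) is the Euler--Lagrange equation of
\begin{equation*}
J(u)=\tfrac{1}{2}\|u^{+}\|^{2}-\tfrac{1}{2}\|u^{-}\|^{2}-\Phi(u),\qquad \Phi(u):=\int_{\mathbb{V}}F(x,u)\,d\mu,
\end{equation*}
with $J\in C^{1}(E,\mathbb{R})$ by $(S_{2})$--$(S_{3})$; critical points of $J$ solve (\ref{maineq}) pointwise.

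Next I would carry out the Szulkin--Weth reduction on the generalized Nehari manifold
\begin{equation*}
\mathcal{M}:=\{u\in E\setminus E^{-}:\ \langle J'(u),u\rangle=0\ \text{and}\ \langle J'(u),v\rangle=0\ \text{for all}\ v\in E^{-}\}.
\end{equation*}
The three facts to establish are: (i) for each $w\in E^{+}\setminus\{0\}$, the restriction $J|_{\mathbb{R}^{+}w\oplus E^{-}}$ attains a unique strict maximum at some $\hat m(w)\in\mathcal{M}$, with existence from coerciveness along rays via $(S_{4})$ and uniqueness from the strict monotonicity $(S_{5})$; (ii) $\hat m$ descends to a homeomorphism $m:S^{+}\to\mathcal{M}$, where $S^{+}$ is the unit sphere in $E^{+}$, with $\Psi:=J\circ m\in C^{1}(S^{+},\mathbb{R})$ and $\Psi'(w)=0\Leftrightarrow J'(m(w))=0$; (iii) $c:=\inf_{\mathcal{M}}J=\inf_{S^{+}}\Psi>0$, the positivity coming from $(S_{2})$--$(S_{3})$ and the $\ell^{p}$ embedding. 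Ekeland's variational principle applied to $\Psi$ on $S^{+}$ then produces $w_{n}\in S^{+}$ with $\Psi(w_{n})\to c$ and $\Psi'(w_{n})\to 0$, whence $u_{n}:=m(w_{n})$ is a Palais--Smale sequence for $J$ at level $c$; the standard identity $\tfrac{1}{2}\langle J'(u_{n}),u_{n}\rangle-J(u_{n})\to -c$ combined with $(S_{4})$ gives boundedness of $(u_{n})$ in $E$.

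The final step is to extract a nontrivial critical point from $(u_{n})$, where discreteness and periodicity are essential. If $\|u_{n}\|_{\infty}\to 0$ along a subsequence, the elementary inequality $\|u_{n}\|_{p}^{p}\le\|u_{n}\|_{\infty}^{p-2}\|u_{n}\|_{2}^{2}$ combined with $(S_{2})$--$(S_{3})$ forces $\Phi(u_{n}),\langle\Phi'(u_{n}),u_{n}\rangle\to 0$, giving $c=0$, a contradiction. Otherwise there exist $\delta>0$ and $x_{n}\in\mathbb{V}$ with $|u_{n}(x_{n})|\ge\delta$; writing $x_{n}=Tk_{n}+r_{n}$ with $r_{n}\in\{0,\dots,T-1\}^{N}$ and setting $\tilde u_{n}:=u_{n}(\cdot+Tk_{n})$, the $T$-periodicity of $V$ and $f$ yields $J(\tilde u_{n})=J(u_{n})$ and $\|J'(\tilde u_{n})\|_{E^{*}}\to 0$. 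Since pointwise evaluation is continuous on $\ell^{2}$ and $r_{n}$ takes only finitely many values, a subsequence satisfies $\tilde u_{n}\rightharpoonup u$ with $u\ne 0$ and $J'(u)=0$. A Fatou-type argument based on $(S_{5})$ --- which implies every nontrivial critical point of $J$ lies in $\mathcal{M}$ and realizes $\sup_{t\ge 0,\,v\in E^{-}}J(tu^{+}+v)$ --- together with the weak lower semicontinuity of $\|\cdot\|^{2}$ gives $J(u)\le c$, forcing $J(u)=c$. The main obstacle is precisely this compactness-via-translation argument and the verification that the weak limit attains the minimum level on $\mathcal{M}$; the $T$-periodicity of $V$ and $f$ (which makes $J$ invariant under lattice translations) and the strict monotonicity $(S_{5})$ (which renders $\mathcal{M}$ a graph over $S^{+}$) are both indispensable.
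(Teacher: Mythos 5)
Your proposal follows the same route as the paper: the identification $W^{1,2}(\mathbb{V})\sim\ell^{2}(\mathbb{V})$, the spectral splitting $E=E^{+}\oplus E^{-}$ with the equivalent inner product, the generalized Nehari manifold $\mathcal{M}$, the reduction to a $C^{1}$ functional $\Psi$ on $S^{+}$, Ekeland's principle, the $\ell^{\infty}$-concentration alternative, translation by periods, pointwise passage to the limit in $\Phi'$, and the Fatou argument for $\Phi(\tilde u)=c$. Your variant of the dichotomy (working with $\|u_{n}\|_{\infty}$ and deriving $c=0$ directly, rather than the paper's argument via $\|u_{n}^{+}\|_{p}$ and Lemma \ref{lem4}(ii)) is a legitimate and equally valid alternative.

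The one step that is genuinely under-justified is the boundedness of the Palais--Smale sequence. You assert that ``the standard identity $\tfrac{1}{2}\langle J'(u_{n}),u_{n}\rangle-J(u_{n})\to-c$ combined with $(S_{4})$ gives boundedness,'' but that identity only says $\int_{\mathbb{V}}\bigl(\tfrac{1}{2}f(x,u_{n})u_{n}-F(x,u_{n})\bigr)d\mu\to c$, and in the absence of an Ambrosetti--Rabinowitz condition the integrand $\tfrac{1}{2}fu-F$ is merely positive; $(S_{4})$ and $(S_{5})$ give no pointwise lower bound of the form $\theta\,|u|^{q}$ that would convert this into control of $\|u_{n}\|$. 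This is precisely why the paper devotes Proposition \ref{prop3} to proving that $\Phi$ is coercive on $\mathcal{M}$: the proof there is by contradiction, normalizing $v_{n}=u_{n}/\|u_{n}\|$, ruling out $v_{n}^{+}\to 0$ in $\ell^{p}$ via Proposition \ref{prop1}, extracting an $\ell^{\infty}$-concentration point, translating by periods, and applying Fatou's lemma with $(S_{4})$ to the quotient $F(x,\tilde u_{n})/\tilde u_{n}^{2}$ --- essentially a second run of the same concentration machinery you use later for the limit extraction. Note also that this coercivity is not optional elsewhere in your outline: the continuity of $m$ (hence item (ii) of your reduction) and the equivalence of Palais--Smale sequences both rely on it. So you should either import Proposition \ref{prop3} wholesale or replace your one-line justification by that argument; as written, the boundedness claim does not follow from what you cite.
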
 	
This theorem is a discrete analog of the result in \cite{MR2557725}. However, the embeddings in our setting are different from that, which allows us to remove the constraint $p<\frac{2N}{N-2}$(subcritical). We only need to assume $p>2$.

The multiplicity of solutions to (\ref{maineq}) is very important, see \cite{MR2557725} for details. 

Denote the orbit of $\,u_{0}$ under the action of $\, \mathbb{Z}^{N}$ by $\,\mathcal{O}(u_{0})$, i.e., $\mathcal{O}(u_{0}) := \{u_{0}(\cdot-kT):k\in\mathbb{Z}^{N}\}$. Note that if $u_{0}$ is a solution of ($\ref{maineq}$), then so  are all elements in $\mathcal{O}(u_{0})$. We say two solutions $u_{1}$ and $u_{2}$ are geometrically distinct if $\mathcal{O}(u_{1})$ and $\,\mathcal{O}(u_{2})$ are disjoint.
\begin{theorem} \label{thm2}
	Suppose that $f$ is odd in u and the assumptions in Theorem \ref{thm1} are satisfied. Then (\ref{maineq})  admits infinitely many geometrically distinct solutions.	
\end{theorem}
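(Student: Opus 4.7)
The plan is to combine the generalized Nehari manifold reduction from Theorem~\ref{thm1} with a Krasnoselski--Ljusternik--Schnirelman genus argument, exploiting both the $\mathbb{Z}_{2}$-symmetry coming from the oddness of $f$ and the $\mathbb{Z}^{N}$-translation symmetry of \eqref{maineq}. This is the discrete analog of the Szulkin--Weth multiplicity scheme from \cite{MR2768820,MR2557725}.

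First I would retain the setup used for Theorem~\ref{thm1}: the orthogonal splitting $W^{1,2}(\mathbb{V})=E^{+}\oplus E^{-}$ induced by the spectral projection of $-\Delta+V$ at $0$, the functional $\Phi(u)=\tfrac{1}{2}(\|u^{+}\|^{2}-\|u^{-}\|^{2})-\int_{\mathbb{V}}F(x,u)\,d\mu$, the generalized Nehari manifold $\mathcal{M}$, the homeomorphism $m\colon S^{+}\to\mathcal{M}$ from the unit sphere $S^{+}\subset E^{+}$, and the reduced functional $\Psi=\Phi\circ m\in C^{1}(S^{+},\mathbb{R})$ whose critical points correspond bijectively to nontrivial critical points of $\Phi$. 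Since $f$ is odd, $F$, $\Phi$, $m$ and $\Psi$ are all even. I would then introduce the minimax values
\[
c_{k}:=\inf_{A\in\Sigma_{k}}\sup_{w\in A}\Psi(w),\qquad \Sigma_{k}:=\{A\subset S^{+}\text{ closed, symmetric},\ \gamma(A)\geq k\},
\]
where $\gamma$ is the Krasnoselski genus, and verify $0<c_{1}\leq c_{2}\leq\cdots$ with $c_{k}\to\infty$; the lower bound uses Theorem~\ref{thm1}, while the divergence uses the super-quadratic growth $(S_{4})$, which forces $\sup_{A}\Psi\to\infty$ on any sequence of symmetric sets escaping every finite-dimensional subspace of $E^{+}$.

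The heart of the argument is a translation-invariant compactness/splitting statement: any Palais--Smale sequence $\{u_{n}\}\subset\mathcal{M}$ for $\Phi$ at level $c$ either, after lattice translations $u_{n}(\cdot-k_{n}T)$, converges to a nontrivial critical point, or splits as a finite sum $v_{0}+\sum_{j=1}^{J}v_{j}(\cdot-k_{n}^{j}T)$ of translated critical points plus an $\ell^{p}$-vanishing remainder, with $c=\sum_{j=0}^{J}\Phi(v_{j})$. The discrete non-vanishing lemma on $\mathbb{Z}^{N}$ (any bounded sequence in $W^{1,2}(\mathbb{V})$ with non-vanishing $\ell^{\infty}$-norm has, after a lattice translation, a pointwise convergent subsequence with a nonzero limit) together with the discrete Brezis--Lieb identity and the bijective correspondence $u\mapsto m^{-1}(u)$ on $\mathcal{M}$ drive this iterated extraction of bumps.

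Finally I argue by contradiction: assume \eqref{maineq} has only finitely many geometrically distinct solutions, with representatives $\mathcal{F}$. The splitting lemma then confines every $(PS)$-limit level of $\Phi$, and hence of $\Psi$, to the countable discrete set $\bigl\{\sum_{v\in\mathcal{F}'}\Phi(v):\mathcal{F}'\text{ a finite multiset from }\mathcal{F}\bigr\}$. A standard equivariant deformation lemma on $S^{+}$ shows each $c_{k}$ is a critical value of $\Psi$, and whenever $c_{k}=c_{k+1}=\cdots=c_{k+l-1}$ the set of critical points at that level has genus $\geq l$; but a set consisting of finitely many $\mathbb{Z}^{N}$-orbits (with their $\mathbb{Z}_{2}$-reflections) is countable and discrete in $S^{+}$, hence of bounded genus. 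Since $c_{k}\to\infty$ through values in a discrete set, arbitrarily long plateaus must occur, producing unbounded genus at a single level --- a contradiction. The main obstacle is the non-vanishing/splitting lemma, which is where the lattice structure is crucial: the $\mathbb{Z}^{N}$-translations act by isometries and the discrete Sobolev embeddings $W^{1,2}(\mathbb{V})\hookrightarrow\ell^{p}(\mathbb{V})$ hold for \emph{all} $p\geq 2$ with no critical exponent, making the iterated bump extraction noticeably cleaner than its $\mathbb{R}^{N}$ counterpart.
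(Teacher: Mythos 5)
Your overall architecture --- Nehari reduction, Krasnoselskii genus, $\mathbb{Z}^{N}$-translation invariance, and a contradiction with a finite set $\mathcal{F}$ of orbit representatives --- matches the paper's, which follows Szulkin--Weth. But your key compactness input differs: the paper never proves a full profile decomposition. It uses the weaker discreteness statements that $\inf\{\|v-w\|:v,w\in K,\ v\neq w\}>0$ (Lemma \ref{lem2.1}) and that two Palais--Smale sequences in $\Psi^{d}$ either drift infinitely far apart or stay a uniform distance $\rho(d)>0$ apart (Lemma \ref{lem2.4}); these feed a pseudo-gradient flow whose trajectories converge to critical points (Lemma \ref{lem2.5}) and an entrance-time map yielding the odd deformation $h:\Psi^{d+\epsilon}\setminus U\to\Psi^{d-\epsilon}$. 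Your iterated bump-extraction/splitting lemma is a strictly stronger statement than is needed; it is a legitimate alternative (Coti Zelati--Rabinowitz style), but you would be proving more than necessary, and you should not present it as the only obstacle.

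The genuine gap is in your endgame. First, the claim $c_{k}\to\infty$ is unjustified and likely unprovable here: the usual mechanism (showing $\inf_{Z\cap S^{+}}\Psi\to\infty$ over subspaces $Z$ of growing codimension) relies on compact embeddings, which is precisely what the periodic problem lacks; nothing in $(S_{4})$ forces $\sup_{A}\Psi$ to be large on \emph{every} symmetric set of large genus. Second, the inference ``$c_{k}\to\infty$ through values in a discrete set, hence arbitrarily long plateaus must occur'' is false: a strictly increasing sequence in an unbounded discrete set tends to infinity with no plateaus at all. Neither claim is needed. The correct finish, and the one the paper uses, is: under the finiteness hypothesis $K$ is uniformly discrete, so $\gamma(K_{d})\leq 1$ for every level $d$; the deformation argument then gives both $K_{c_{k}}\neq\emptyset$ and the strict inequality $c_{k}<c_{k+1}$ (a plateau of length two would force $\gamma(K_{c_{k}})\geq 2$). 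This produces infinitely many distinct critical values of $\Psi$, whereas a finite $\mathcal{F}$ admits only finitely many --- that is the contradiction. Replace your final paragraph with this and the argument closes.
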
 
This paper is organized as follows. In Section 2, we prove the equivalence and embeddings between some spaces. Moreover, we decompose $W^{1,2}$ corresponding to the spectral decomposition of $-\Delta + V$ and introduce an equivalent inner product in $W^{1,2}$. The generalized Nehari manifold is introduced in Section 3 and we also establish the variational framework in this part. The proof of Theorem \ref{thm1} is given in Section 4. In Section 5, we talk about the idea of the proof of Theorem \ref{thm2}. The Appendix is a supplement to the proof of Proposition \ref{prop1}.

\section{Preliminaries}
Let $\mathbb{Z^{N}}=(\mathbb{V},\mathbb{E})$ be a lattice graph.
 We will show that 
$\| \cdot \|_{W^{1,2}} $ and $ \| \cdot \|_{2}$ are equivalent norms on $C(\mathbb{V})$, denoted by 
$\| \cdot  \|_{W^{1,2}}   \sim  \| \cdot  \|_{2}   $, i.e., there exist constants $C_{1}, C_{2} >0$, such that 
 $$C_{1}\| u \|_{2}  \leq  \| u \|_{W^{1,2}}  \leq  C_{2} \| u \|_{2} $$ 
 for all $u\in C(\mathbb{V})$.
Note that, $\forall u\in W^{1,2}(\mathbb{V})$,
\begin{displaymath}
	\begin{aligned}
		\mathcal{E} (u) := &\int_{\mathbb{V}} 	| \nabla u |^{2} d \mu
		=\sum_{x\in \mathbb{V}}\sum_{y \sim x} \frac{1}{2}\big(u(y)-u(x)\big)^{2}\\
		\leq & \sum_{x\in \mathbb{V}}\sum_{y \sim x} \big(u^{2}(y)+u^{2}(x)\big)
		=  2 \sum_{x\in \mathbb{V}}\sum_{y \sim x}  u^{2}(x)\\
			=&4N\|u\|_{2}^{2}.
		\end{aligned}
	\end{displaymath}
We have $\|u\|^{2}_{2} \leq \|u\|^{2}_{W^{1,2}} \leq (4N+1)\|u\|^{2}_{2}$ and then we can regard the two spaces $W^{1,2}(\mathbb{V})$ and $\ell^{2}(\mathbb{V})$ as the same.
Clearly, 
$\ell^{p}(\mathbb{V}) \hookrightarrow \ell^{q}(\mathbb{V})$ continuously for any $2 \leq p<q \leq \infty$. Hence,  we have 
\begin{displaymath}
	\begin{aligned}
		&W^{1,2}(\mathbb{V}) \sim \ell^{2}(\mathbb{V}),\\
		&W^{1,2}(\mathbb{V}) \hookrightarrow \ell^{p}(\mathbb{V}), \quad \forall p > 2.
	\end{aligned}
\end{displaymath}
Let $E:=W^{1,2}(\mathbb{V})$ with the inner product $ \langle \cdot,\cdot \rangle_{W^{1,2}} $ defined as ($\ref{inner p.}$). 
Set $L:= -\Delta +V$. This is a self-adjoint operator on $E$ under the assumption $(S_{1})$. By the spectral decomposition theorems of self-adjoint operators in \cite{MR840880}, there exists a pedigree $\{ F_{\lambda}| \lambda \in (-\infty,+\infty)\}$ in $E$ such that
$$ L = \int_{-\infty}^{+\infty} \lambda dF_{\lambda}
 \equiv \int_{\sigma(L)} \lambda dF_{\lambda}.$$
Set 
$$T^{+}:=\int_{\sigma(L)} \mathcal{X}_{\sigma(L)\cap(0,+\infty)} (\lambda) d F_{\lambda},$$
$$T^{-}:=\int_{\sigma(L)} \mathcal{X}_{\sigma(L)\cap(-\infty,0)} (\lambda) dF_{\lambda}.$$
They are both projection operators by the definition of pedigree. Moreover, 
\begin{equation}\label{identity}
	T^{+}+T^{-} = I
\end{equation}
since $\int_{\sigma(L)} 1 dF_{\lambda} = I$. Define two subspaces of $E$ as the following:
$$E^{+} := \{ u\in E | T^{+}u = u \},$$
$$E^{-} := \{ u\in E | T^{-}u = u \}.$$
According to (\ref*{identity}) and the properties of projection operators, it's easy to check $E=E^{-} \oplus E^{+}$. For  each $u \in E^{+}$,
\begin{displaymath}
	\begin{aligned}
		(Lu,u)&=(LT^{+}u,u)\\
		& =\int_{\sigma(L)} \lambda\mathcal{X}_{\sigma(L)\cap(0,+\infty)} (\lambda) d (F_{\lambda}u,u)\\
		&=\int_{\sigma(L)\cap(0,+\infty)} \lambda d(F_{\lambda}u,u) \geq 0,
	\end{aligned}
\end{displaymath}
where the last inequality is based on the fact that $F_{\lambda}$ is a projection operation and $(\cdot,\cdot)$ is the standard inner product in $l^{2}(\mathbb{V})$. 
Similarly, we have $(Lu,u)\leq 0$ for each $u \in E^{-}$. That's to say, 	$L$ is positive definite on $E^{+}$ and negative definite on $E^{-}$. Then we can define new inner products $\langle\cdot,\cdot \rangle$ on $E^{+}$ and $E^{-}$ respectively: 
\begin{displaymath}
	\begin{aligned}
		&\langle u,v \rangle := (Lu,v),\quad \forall u,v \in E^{+} ,\\
		&\langle u,v \rangle := -(Lu,v),\quad \forall u,v \in E^{-} .\\
	\end{aligned}
\end{displaymath}
Correspondly, we have new norms $\|\cdot\|$ induced by the inner products. It is well-known that the spectrum of combinatorial Laplacian on $\mathbb{Z}^{N}$ is $[0,4N]$, see \cite{MR1782338} for example. Note that $L$ is a bounded self-adjoint operator, being negative and positive definite  on $E^{-}$ and $E^{+}$ respectively. Taking $E^{-}$ as an example, there exist constants $\alpha>\beta>0$ such that
\begin{displaymath}
	\begin{aligned}
		-\alpha 
		&= \inf_{\lambda \in \sigma(L)\cap (-\infty,0)} \lambda
		=\inf_{
			          \substack{\|u\|_{W^{1,2}}=1 \\
			                               u\in E^{-} }
			         } (Lu,u),	\\
		-\beta 
		&= \sup_{\lambda \in \sigma(L)\cap (-\infty,0)} \lambda
		=\sup_{
			            \substack{\|u\|_{W^{1,2}}=1 \\
				                           u\in E^{-} }
	              	} (Lu,u).	\\
	\end{aligned}
\end{displaymath}
For each $ u \in E^{-}$, 
\begin{displaymath}
	\begin{aligned}
		-\alpha &\leq \frac{(Lu,u)}{\|u\|^2_{W^{1,2}}} \leq -\beta,\\
		\beta \|u\|^2_{W^{1,2}} &\leq -(Lu,u) \leq \alpha \|u\|^2_{W^{1,2}},
	\end{aligned}
\end{displaymath}
i.e., 
$$	\beta \|u\|^2_{W^{1,2}} \leq \|u\|^{2} \leq \alpha\|u\|^2_{W^{1,2}}.$$
So, $\| \cdot \| \sim \| \cdot \|_{W^{1,2}}$ on $E^{-}$. Similarly, one can check $\| \cdot \| \sim \| \cdot \|_{W^{1,2}}$ on $E^{+}$.
Moreover, $\forall u=u^{+}+u^{-} \in E^{+}  \oplus E^{-}$,
\begin{displaymath}
	\begin{aligned}
		\|u\|^{2} &= \|u^{+}\|^{2} + \|u^{-}\|^{2} ,\\
		\|u \|^{2}_{W^{1,2}} &= \|u^{+}\|_{W^{1,2}}^{2} + \|u^{-}\|_{W^{1,2}}^{2} .
	\end{aligned}
\end{displaymath}
 It follows from above that $\| \cdot \| \simeq \| \cdot \|_{W^{1,2}}$  on $E$.
%\\ \hspace*{\fill}\\

Each weak solution of (\ref{maineq}) corresponds to a critical point of the following variational functional:
\begin{displaymath}
	\begin{aligned}
			\Phi(u) &= \frac{1}{2} \int_{\mathbb{V}} 
			|\nabla u|^{2}+V(x)u^{2} d\mu 
			-\int_{\mathbb{V}} F(x,u) d\mu \\ 
	&	\equiv \frac{1}{2}(\|u^{+}\|^{2}-\|u^{-}\|^{2})
		-\int_{\mathbb{V}} F(x,u) d\mu.
	\end{aligned}
\end{displaymath}
 
  \begin{remark}\label{rm1}
 	By choosing appropriate test functions, one can see that each weak solution actually satisfies the equation $(\ref{maineq})$ pointwisely. See Proposition3.1 in \cite{MR3665801}.
 \end{remark}

\section{Generalized Nehari Manifolds}
	Let $u$ be a nontrivial critical point of $\Phi$, i.e., $\Phi'(u)=0$ and $u\neq 0$, then $$\Phi(u)=\Phi(u)-\frac{1}{2}\Phi'(u)u 
	= \int_{\mathbb{V}}  \frac{1}{2}f(x,u)u - F(x,u) d\mu.$$
Note that, according to $(S_{2})$ and $(S_{3})$,
\begin{equation}\label{uf(x,u)}
0<F(x,u)<\frac{1}{2}f(x,u)u, \quad\forall u\neq0.
\end{equation} 
Every nontrivial critical point $u_{0}$ corresponds to a positive value $\Phi(u_{0})>0$. We call it a ground state solution if it corresponds to the minimal critical value.\\
Define
\begin{displaymath}
		\mathcal{M}:= \{ u\in E\backslash E^{-} : \Phi'(u)u=0 \text{ and } \Phi'(u)v=0 ,\forall v\in E^{-}\}.
\end{displaymath}
$\mathcal{M}$ is called the generalized Nehari manifold which has been introduced by \cite{MR2175045}. We show that $\mathcal{M}$ is homeomorphic to the unit sphere $S^{+}$ in $E^{+}$. In fact, $\mathcal{M}$ contains all nontrivial critical points of $\Phi$ because
 $\Phi \leq 0$ on $ E^{-}$.
 Set $c:= \inf_{\mathcal{M}}\Phi.$
 We hope $c$ is attained at some $u_{0}\in \mathcal{M}$ and $u_{0}$ is a critical point of $\Phi$.
For each $u \in E\backslash E^{-}$, set
\begin{displaymath}
	\begin{aligned}
		E(u) &:= E^{-} \oplus \mathbb{R} u = E^{-} \oplus \mathbb{R} u^{+}, \\
		\hat{E}(u) &:= E^{-} \oplus \mathbb{R}^{+} u = E^{-} \oplus \mathbb{R}^{+} u^{+},
	\end{aligned}
\end{displaymath}
where $\mathbb{R}^{+} :=[0,+\infty)$.
In the following, we use the method of generalized Nehari manifold to find a critical point of $\Phi$. Specifically, we first establish a homeomorphism between $\mathcal{M}$ and $S^{+}$. Then, the original problem is transformed into finding a critical point of a $C^{1}$ functional on $S^{+}$. It's worth pointing out that $S^{+}$ is a $C^{1}$ submanifold.

\begin{lemma}\label{lem2}
	For each $\epsilon > 0$, there exists $C_{\epsilon} >0$ such that 
	$$
	|f(x,u)| \leq \epsilon |u| + C_{\epsilon}|u|^{p-1},\quad \forall u\in \mathbb{R}. 
	$$
\end{lemma}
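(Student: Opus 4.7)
The plan is to split the bound on $|f(x,u)|$ into two regimes according to the size of $|u|$, obtaining the linear term $\epsilon|u|$ from the small-$u$ behaviour in $(S_3)$ and the superlinear term $C_\epsilon |u|^{p-1}$ from the growth assumption $(S_2)$. Since $p>2$, the two terms interpolate so that either one alone dominates $|f(x,u)|$ in its respective regime, and then their sum dominates everywhere uniformly in $x$.

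First I would fix $\epsilon>0$ and invoke $(S_3)$: since $f(x,u) = o(u)$ uniformly in $x$ as $u\to 0$, there exists $\delta = \delta(\epsilon)>0$ such that $|f(x,u)| \le \epsilon |u|$ whenever $|u|\le\delta$, for every $x\in\mathbb{V}$. This handles the small-$u$ regime and already gives the desired bound there (with any choice of $C_\epsilon\ge 0$).

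Next, for the complementary regime $|u|\ge\delta$, I would use $(S_2)$, which gives $|f(x,u)| \le a(|u|+|u|^{p-1})$. The key observation is that $p-2>0$, so $|u|=|u|^{2-p}\cdot |u|^{p-1}\le\delta^{2-p}|u|^{p-1}$ whenever $|u|\ge\delta$. Substituting, $|f(x,u)| \le a(\delta^{2-p}+1)|u|^{p-1}$ for $|u|\ge\delta$, which is bounded by $C_\epsilon |u|^{p-1}$ with $C_\epsilon := a(\delta^{2-p}+1)$. Setting this $C_\epsilon$ and combining the two regimes yields $|f(x,u)| \le \epsilon|u| + C_\epsilon |u|^{p-1}$ for all $u\in\mathbb{R}$ and all $x\in\mathbb{V}$.

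There is no real obstacle here: the statement is a routine interpolation between the near-zero control from $(S_3)$ and the polynomial upper bound from $(S_2)$, which is why the author places it as a lemma before the substantive variational arguments. The only minor care needed is to note that the constants in $(S_3)$ and $(S_2)$ can be chosen independently of $x$ by the uniformity in $(S_3)$ and the $T$-periodicity of $f$.
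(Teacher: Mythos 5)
Your proof is correct and is exactly the standard argument the paper is invoking when it says the lemma ``follows from $(S_{2})$ and $(S_{3})$'': control the region $|u|\le\delta$ by $(S_3)$ and absorb the linear term into $C_\epsilon|u|^{p-1}$ on $|u|\ge\delta$ using $p>2$. The paper omits the details, so your write-up simply supplies them along the same route.
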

This follows from $(S_{2})$ and $(S_{3})$.
\begin{proposition}\label{prop1}
	If  $\,u \in \mathcal{M}$, then $u$ is the unique global maximum of $\Phi |_{\hat{E}(u)}$.
\end{proposition}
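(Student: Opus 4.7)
The plan is to reparameterize $\hat{E}(u)$, use the Nehari identities defining $\mathcal{M}$ to collapse $\Phi(w)-\Phi(u)$ into a manageable form, and then invoke a pointwise algebraic inequality coming from $(S_{5})$. For any $w = tu^{+}+v\in\hat{E}(u)$ (with $t\ge 0$, $v\in E^{-}$), set $s:=t\ge 0$ and $\phi:=v-tu^{-}\in E^{-}$, so that $w=su+\phi$, $w^{+}=su^{+}$, and $w^{-}=su^{-}+\phi$. A direct expansion yields
\begin{align*}
\Phi(w) - \Phi(u) = \tfrac{s^{2}-1}{2}\bigl(\|u^{+}\|^{2}-\|u^{-}\|^{2}\bigr) - s\langle u^{-},\phi\rangle - \tfrac{1}{2}\|\phi\|^{2} - \int_{\mathbb{V}}\bigl[F(x,w)-F(x,u)\bigr]\,d\mu.
\end{align*}
Plugging in the identities $\|u^{+}\|^{2}-\|u^{-}\|^{2}=\int_{\mathbb{V}} f(x,u)u\,d\mu$ (from $\Phi'(u)u=0$) and $-\langle u^{-},\phi\rangle=\int_{\mathbb{V}} f(x,u)\phi\,d\mu$ (from $\Phi'(u)\phi=0$ for $\phi\in E^{-}$) collapses this to
\begin{align*}
\Phi(w) - \Phi(u) = -\tfrac{1}{2}\|\phi\|^{2} - \int_{\mathbb{V}} H(x)\,d\mu,
\end{align*}
where $H(x) := F(x,w(x))-F(x,u(x)) - \tfrac{s^{2}-1}{2}f(x,u(x))u(x) - sf(x,u(x))\phi(x)$.

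The heart of the argument is then the pointwise inequality: for every $x\in\mathbb{V}$, $\xi\in\mathbb{R}$, $s\ge 0$, and $\eta\in\mathbb{R}$,
\begin{align*}
F(x,s\xi+\eta) - F(x,\xi) - \tfrac{s^{2}-1}{2}f(x,\xi)\xi - sf(x,\xi)\eta \ge 0,
\end{align*}
with equality only when $\xi=s\xi+\eta=0$ or $\xi\neq 0$ and $(s,\eta)=(1,0)$. I would prove this by fixing $\xi>0$, setting $\zeta:=s\xi+\eta$, and, for $\zeta\ge 0$, minimizing the left side over $s\ge 0$ (the interior critical point sits at $s=\zeta/\xi$) to reduce to $\Psi(\zeta)\ge\Psi(\xi)$, where $\Psi(\tau):=F(x,\tau)-\tfrac{\tau^{2}}{2\xi}f(x,\xi)$. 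Assumption $(S_{5})$ gives $\Psi'(\tau)=\tau\bigl[\tfrac{f(x,\tau)}{\tau}-\tfrac{f(x,\xi)}{\xi}\bigr]$, so $\Psi$ is strictly decreasing on $(0,\xi)$ and strictly increasing on $(\xi,\infty)$, hence attains its minimum on $[0,\infty)$ uniquely at $\tau=\xi$. For $\zeta<0$ the minimum over $s\ge 0$ is at $s=0$, and positivity then follows from $F\ge 0$ together with $F(x,\xi)<\tfrac{1}{2}\xi f(x,\xi)$. The case $\xi<0$ is symmetric, and $\xi=0$ reduces to $F(x,\eta)\ge 0$ since $f(x,0)=0$.

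Granted this pointwise bound, $\Phi(w)-\Phi(u)\le -\tfrac{1}{2}\|\phi\|^{2}\le 0$. Strict equality forces $\phi=0$ and $H\equiv 0$ pointwise. Since $u^{+}\neq 0$ and $E^{+}\cap E^{-}=\{0\}$ force $u\not\equiv 0$ on $\mathbb{V}$, there is some $x_{0}$ with $u(x_{0})\ne 0$, and the equality clause of the pointwise inequality then forces $s=1$, so $w=u$. The chief obstacle is the pointwise algebraic inequality itself, in particular the mixed-sign case where $u(x)$ and $su(x)+\phi(x)$ lie on opposite sides of $0$; this is presumably the content of the Appendix mentioned in the introduction.
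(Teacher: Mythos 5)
Your proof is correct, and its first half --- reparameterizing $\hat{E}(u)$, expanding $\Phi(w)-\Phi(u)$, and using the two Nehari identities $\Phi'(u)u=0$ and $\Phi'(u)\phi=0$ to collapse everything to $-\tfrac12\|\phi\|^{2}-\int_{\mathbb{V}}H\,d\mu$ --- is exactly what the paper does in its Appendix (there $w$ is written as $u+su+v$ with $s\ge-1$, and your $H$ is the negative of the paper's $g(s)$). Where you genuinely diverge is in the proof of the pointwise inequality $H\ge 0$. The paper fixes the direction and studies $s\mapsto g(s)$ directly: it first uses $F(x,u)<\tfrac12 f(x,u)u$ to dispose of the case $u(x)z(x)\le 0$, and for $u(x)z(x)>0$ it computes $g'(s)=uz\bigl(\tfrac{f(x,u)}{u}-\tfrac{f(x,z)}{z}\bigr)$ and argues from monotonicity of $g$ together with the boundary data $g(-1)<0$ and $g(s)\to-\infty$. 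You instead fix the endpoint $\zeta=s\xi+\eta$ and minimize the convex quadratic in $s$ (convex because $f(x,\xi)\xi>0$ for $\xi\neq 0$), which reduces the whole inequality to the statement that $\tau\mapsto F(x,\tau)-\tfrac{\tau^{2}}{2\xi}f(x,\xi)$ attains its minimum on the closed half-line containing $\xi$ uniquely at $\tau=\xi$ --- an immediate consequence of $(S_{5})$, with the endpoint $\tau=0$ handled by (\ref{uf(x,u)}). Your route buys two things: it sidesteps the sign analysis of $g'$, which in the paper is delicate because $z(s)$ moves with $s$ and the sign of $\tfrac{f(x,u)}{u}-\tfrac{f(x,z(s))}{z(s)}$ can change along the ray; and it produces the equality case $(s,\eta)=(1,0)$ explicitly, which is precisely what the word ``unique'' in the statement requires. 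Your closing worry about the mixed-sign case is unfounded: your own observation that for $\zeta$ and $\xi$ of opposite signs the minimum over $s\ge 0$ sits at $s=0$, where positivity follows from $F\ge 0$ and $F(x,\xi)<\tfrac12 f(x,\xi)\xi$, already settles it, and this is also essentially how the paper handles $uz\le 0$.
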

The proof of Proposition \ref{prop1} is similar to the continuous case, see \cite{MR2557725}, and we put it in the Appendix.
\begin{lemma}\label{lem4}
	(i) There exists $\alpha > 0$ such that $c=\inf_{\mathcal{M}} \Phi \geq \inf_{S_{\alpha}} \Phi > 0 $, where $S_{\alpha}:=\{u\in E^{+}:\|u\|=\alpha\}$.\\
	(ii) For each $u\in \mathcal{M}$, $\|u^{+}\|\geq $ max$\{\|u^{-}\|,\sqrt{2c} \}$. 
\end{lemma}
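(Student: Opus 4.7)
The plan is to bootstrap both bounds from Proposition~\ref{prop1} together with the nonnegativity of $F$ recorded in (\ref{uf(x,u)}), which reduces the positivity of $c$ on the (complicated) manifold $\mathcal{M}$ to a routine small-ball estimate on the subspace $E^{+}$.

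For part (i), I would first establish $\inf_{S_{\alpha}}\Phi>0$ for sufficiently small $\alpha>0$. For $u\in E^{+}$ with $\|u\|=\alpha$ the functional reduces to $\Phi(u)=\tfrac12\alpha^{2}-\int_{\mathbb{V}}F(x,u)\,d\mu$ since $u^{-}=0$. Integrating Lemma~\ref{lem2} in the second variable gives $|F(x,u)|\leq \tfrac{\epsilon}{2}|u|^{2}+\tfrac{C_{\epsilon}}{p}|u|^{p}$; combined with the equivalence $\|\cdot\|\simeq\|\cdot\|_{W^{1,2}}$ on $E^{+}$ and the embeddings $W^{1,2}(\mathbb{V})\hookrightarrow \ell^{2}(\mathbb{V})$, $W^{1,2}(\mathbb{V})\hookrightarrow \ell^{p}(\mathbb{V})$ from Section~2, this yields
\[
\int_{\mathbb{V}}F(x,u)\,d\mu\leq C_{1}\epsilon\,\alpha^{2}+C_{2}C_{\epsilon}\,\alpha^{p}.
\]
Picking $\epsilon$ with $C_{1}\epsilon<\tfrac14$ and then $\alpha$ small enough that $C_{2}C_{\epsilon}\alpha^{p-2}<\tfrac14$ gives $\Phi(u)\geq \alpha^{2}/4>0$ uniformly on $S_{\alpha}$. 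To compare with $c$, I would pick any $u\in\mathcal{M}$; since $u\in E\setminus E^{-}$ we have $u^{+}\neq 0$, so $t_{0}:=\alpha/\|u^{+}\|>0$ and $w:=t_{0}u^{+}\in \hat{E}(u)\cap S_{\alpha}$. Proposition~\ref{prop1} then gives $\Phi(u)\geq \Phi(w)\geq \inf_{S_{\alpha}}\Phi$, and taking the infimum over $\mathcal{M}$ yields $c\geq \inf_{S_{\alpha}}\Phi>0$.

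For part (ii), both inequalities fall out of the splitting
\[
\Phi(u)=\tfrac{1}{2}\|u^{+}\|^{2}-\tfrac{1}{2}\|u^{-}\|^{2}-\int_{\mathbb{V}}F(x,u)\,d\mu
\]
combined with $\Phi(u)\geq c>0$ and the pointwise bound $F(x,u)\geq 0$ from (\ref{uf(x,u)}). Dropping only the nonnegative integral gives $\tfrac12\|u^{+}\|^{2}-\tfrac12\|u^{-}\|^{2}\geq c>0$, so $\|u^{+}\|\geq \|u^{-}\|$; dropping additionally the $-\tfrac12\|u^{-}\|^{2}$ term gives $\tfrac12\|u^{+}\|^{2}\geq c$, so $\|u^{+}\|\geq \sqrt{2c}$.

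The main conceptual step is already bundled into Proposition~\ref{prop1}: once one knows that each $u\in\mathcal{M}$ is the global maximum of $\Phi$ on the half-space $\hat{E}(u)$, the comparison in (i) reduces to scaling $u^{+}$ down onto $S_{\alpha}$ and part (ii) is pure bookkeeping. I do not expect any genuinely new obstacle beyond verifying the embedding constants in the small-ball estimate, which are guaranteed by the preliminary material in Section~2.
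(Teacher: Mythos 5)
Your proposal is correct and follows essentially the same route as the paper: part (i) is obtained by scaling $u^{+}$ onto $S_{\alpha}$ inside $\hat{E}(u)$ and invoking Proposition~\ref{prop1}, together with the small-ball estimate $\int_{\mathbb{V}}F(x,u)\,d\mu=o(\|u\|^{2})$ from Lemma~\ref{lem2}, and part (ii) is the same two-line consequence of $\Phi(u)\geq c$ and $F\geq 0$. The only difference is that you spell out the embedding constants that the paper leaves implicit.
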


\begin{proof}
(i)For every $u\in \mathcal{M}$, there exists $s>0$ such that $su^{+} \in \hat{E}(u) \cap S_{\alpha}$. By Proposition \ref{prop1}, $\Phi(u) \geq \Phi(su^{+} ) \geq \inf_{S_{\alpha}}\Phi$. Take the infimum among all $u$ in $\mathcal{M}$ and we have the first inequality. For $u\in E^{+}$,  $\Phi(u)=\frac{1}{2}\|u\|^{2}-\int_{\mathbb{V}} F(x,u) d\mu$. 
By Lemma \ref{lem2}, 
$\int_{\mathbb{V}} F(x,u) d\mu = o(\|u\|^{2})$ as $u\rightarrow 0$. So, there exists $\alpha >0$ small enough such that the second inequality holds.

(ii)For each $u\in \mathcal{M}$, 
$$
0<c \leq \tfrac{1}{2}(\|u^{+}\|^{2}-\|u^{-}\|^{2})-\int_{\mathbb{V}}F(x,u) d\mu \leq \tfrac{1}{2}(\|u^{+}\|^{2}-\|u^{-}\|^{2}).
$$ 
\end{proof}

\begin{lemma}\label{lem5}
Let $\mathcal{V} \subset E^{+}\backslash \{0\}$ be a compact subset, then there exists $R>0$ such that $\Phi \leq 0$ on $E(u)\backslash B_{R}(0)$ for every $u \in \mathcal{V}$.
\end{lemma}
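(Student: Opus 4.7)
The plan is to argue by contradiction and use the superquadratic growth $(S_4)$ to produce a pointwise blow-up. Suppose no such $R$ exists: then there are $u_n\in\mathcal{V}$ and $w_n\in E(u_n)$ with $\|w_n\|\to\infty$ yet $\Phi(w_n)>0$. By compactness of $\mathcal{V}$ in $E^+\setminus\{0\}$, I pass to a subsequence so that $u_n\to u_0$ strongly in $E^+$ with $u_0\neq 0$ and $\|u_n\|\leq M$ uniformly. Decompose $w_n=s_n u_n+v_n$ with $s_n\in\mathbb{R}$ and $v_n\in E^-$, so that $w_n^+=s_n u_n$ and $w_n^-=v_n$ and $\|w_n\|^2=s_n^2\|u_n\|^2+\|v_n\|^2$.

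From $\Phi(w_n)=\tfrac12(s_n^2\|u_n\|^2-\|v_n\|^2)-\int_{\mathbb{V}}F(x,w_n)\,d\mu>0$ together with $F\geq 0$ (which is (\ref{uf(x,u)})), I obtain $\|v_n\|^2\leq s_n^2\|u_n\|^2$, hence $\|w_n\|^2\leq 2M^2 s_n^2$; therefore $|s_n|\to\infty$ and $v_n/|s_n|$ stays bounded in $E^-$. After a further extraction I have $v_n/|s_n|\rightharpoonup v_\infty$ weakly in $E^-$ and $s_n/|s_n|\to\tau\in\{\pm 1\}$. The key observation at this point is that on $\mathbb{Z}^N$, because $\|\cdot\|_{W^{1,2}}\sim\|\cdot\|_{2}$ and every $\delta_x$ lies in $\ell^2$, weak convergence implies pointwise convergence. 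Consequently $w_n/|s_n|\to \tau u_0+v_\infty$ pointwise on $\mathbb{V}$.

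The crux is to rule out that $\tau u_0+v_\infty$ vanishes identically: if this held, then $\tau u_0\in E^+$ would equal $-v_\infty\in E^-$, and the direct sum $E=E^+\oplus E^-$ established in Section 2 would force $u_0=0$, a contradiction. So there is some $x_0\in\mathbb{V}$ with $\alpha:=(\tau u_0+v_\infty)(x_0)\neq 0$, and then $|w_n(x_0)|=|s_n|\cdot|(w_n/|s_n|)(x_0)|\to\infty$. Plugging this into $(S_4)$ gives $F(x_0,w_n(x_0))/w_n(x_0)^2\to\infty$, and multiplying by the factor $(w_n(x_0)/|s_n|)^2\to\alpha^2>0$ yields $F(x_0,w_n(x_0))/s_n^2\to\infty$. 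Since $F\geq 0$, I conclude $\int_{\mathbb{V}}F(x,w_n)/s_n^2\,d\mu\to\infty$, which contradicts the upper bound $\int_{\mathbb{V}} F(x,w_n)\,d\mu\leq \tfrac12 s_n^2\|u_n\|^2\leq \tfrac12 M^2 s_n^2$ extracted from $\Phi(w_n)>0$.

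The main obstacle I anticipate is proving nonvanishing of the pointwise limit, and this is precisely where the direct-sum decomposition $E=E^+\oplus E^-$ combined with $u_0\neq 0$ does the work. Everything else is a clean application of $(S_4)$ together with the trivial lower bound $\int_{\mathbb{V}}F/s_n^2\,d\mu\geq F(x_0,w_n(x_0))/s_n^2$, which is available for free in the discrete (summation) setting without invoking Fatou's lemma.
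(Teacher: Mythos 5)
Your proof is correct and follows essentially the same route as the paper: argue by contradiction, rescale $w_n$, use compactness of $\mathcal{V}$ to keep the positive part of the rescaled limit away from zero, locate a vertex $x_0$ where $|w_n(x_0)|\to\infty$, and let $(S_4)$ force the $F$-term to overwhelm the quadratic part. The only (harmless) deviations are that you normalize by $|s_n|$ rather than by $\|w_n\|$, you make the nonvanishing step explicit via $E=E^+\oplus E^-$ where the paper is terser, and you replace Fatou's lemma by the trivial single-point lower bound on the sum, which is indeed available in the discrete setting.
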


\begin{proof}
	Since $E(u)=E(\frac{u}{\|u\|})$, we may assume that $\|u\| = 1$ for every $u \in \mathcal{V}$. Suppose by contradiction that there exist $u_{n} \in \mathcal{V}, w_{n} \in E(u_{n})$ such that $\Phi(w_{n}) >0$ for all $n\in \mathbb{N}$ and $\|w_{n}\| \rightarrow \infty \text{ as } n \rightarrow \infty$. Set $v_{n}:=\tfrac{w_{n}}{\|w_{n}\|}= s_{n}u_{n}+v_{n}^{-}$ with $s_{n}^{2} + \|v_{n}^{-}\|^{2} =1$.
Then
\begin{equation}\label{*}
	0<\dfrac{\Phi(w_{n})}{\|w_{n}\|^{2}}
	=\dfrac{1}{2}(s_{n}^{2}-\|v_{n}^{-}\|^{2}) 
	-\int_{\mathbb{V}} \dfrac{F(x,w_{n})}{w_{n}^{2}(x)}v_{n}^{2}(x) d\mu.  
\end{equation}
$\|v_{n}^{-}\|^{2} < s_{n}^{2} = 1-\|v_{n}^{-}\|^{2} $ implies $\dfrac{1}{\sqrt{2}} < s_{n} < 1$ and we have, up to a subsequence, $s_{n} \rightarrow s \in [\dfrac{1}{\sqrt{2}} ,1]$. After passing to a subsequence, let $v_{n} \rightharpoonup v$ in $(E,\| \cdot \| )$. For $x \in \mathbb{V}$, choose a test function $\phi = \delta_{x} \in C_{c}(\mathbb{V})$ and we have $v_{n}(x) \rightarrow v(x)$ pointwisely. It follows that $v=su+v^{-} \neq 0$, i.e., $\exists x_{0} \in \mathbb{V}$, s.t. $v(x_{0}) \neq 0$ and $w_{n}(x_{0})=\|w_{n}\|v_{n}(x_{0})\rightarrow +\infty$ as $n \rightarrow \infty$. By Fatou's lemma,
$$
\int_{\mathbb{V}} \dfrac{F(x,w_{n})}{w_{n}^{2}}v_{n}^{2} d\mu \rightarrow +\infty ,
$$
which contradicts (\ref{*}).
\end{proof} 

\begin{proposition}\label{prop2}
	For each $u\notin E^{-}$,  $\hat{E}(u)$ and $\mathcal{M}$ have a unique intersection, denoted by $\hat{m}(u)$ and this $\hat{m}(u)$ is the unique global maximum of $\Phi|_{\hat{E}(u)}$.
\end{proposition}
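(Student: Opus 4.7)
Since $u\notin E^-$ one has $u^+\neq 0$, and because $\hat E(u)=E^-\oplus\mathbb R^+ u=E^-\oplus\mathbb R^+ u^+$, I may replace $u$ by $u^+/\|u^+\|$ and assume throughout that $u\in E^+$ with $\|u\|=1$. The plan has three steps: (a) produce a global maximizer $w$ of $\Phi|_{\hat E(u)}$; (b) verify $w\in\mathcal{M}$, so that $\hat m(u):=w$ realizes the intersection; (c) deduce uniqueness from Proposition~\ref{prop1}.

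\textbf{Existence of a maximizer.} By Lemma~\ref{lem4}(i), $\sup_{\hat E(u)}\Phi\ge\Phi(\alpha u)\ge\inf_{S_\alpha}\Phi>0$ for a suitable small $\alpha>0$. Applying Lemma~\ref{lem5} to the compact singleton $\mathcal V=\{u\}$ produces $R>0$ with $\Phi\le 0$ on $E(u)\setminus B_R$, and hence on $\hat E(u)\setminus B_R$. Take a maximizing sequence $w_n=s_n u+v_n\in\hat E(u)$ with $s_n\ge 0$, $v_n\in E^-$, and $\|w_n\|^2=s_n^2+\|v_n\|^2\le R^2$. After passing to a subsequence, $s_n\to s\ge 0$ and $v_n\rightharpoonup v$ in $E^-$. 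Because $\|\cdot\|\simeq\|\cdot\|_{W^{1,2}}$ and $W^{1,2}(\mathbb V)\sim\ell^2(\mathbb V)$, testing against $\delta_x\in C_c(\mathbb V)$ yields pointwise convergence $v_n(x)\to v(x)$, so $w_n(x)\to w(x):=(su+v)(x)$ pointwise on $\mathbb V$. Weak lower semicontinuity gives $\|v\|^2\le\liminf\|v_n\|^2$, while (\ref{uf(x,u)}) provides $F\ge 0$, so Fatou's lemma gives $\int_{\mathbb V}F(x,w)\,d\mu\le\liminf\int_{\mathbb V}F(x,w_n)\,d\mu$. Together with $s_n^2\to s^2$ and the identity $\Phi(w_n)=\tfrac{s_n^2}{2}-\tfrac12\|v_n\|^2-\int_{\mathbb V}F(x,w_n)\,d\mu$, subadditivity of $\limsup$ yields
\[
\Phi(w)=\tfrac{s^2}{2}-\tfrac12\|v\|^2-\int_{\mathbb V}F(x,w)\,d\mu\ \ge\ \limsup_{n\to\infty}\Phi(w_n)\ =\ \sup_{\hat E(u)}\Phi.
\]
Thus $w$ attains the supremum. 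Since $\Phi\le 0$ on $E^-$ while $\sup_{\hat E(u)}\Phi>0$, necessarily $s>0$ and $w\notin E^-$.

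\textbf{From maximizer to $\mathcal{M}$ and uniqueness.} As $s>0$, $w$ lies in the relative interior of $\hat E(u)$ inside $E(u)=E^-\oplus\mathbb R u$, so the derivative $\Phi'(w)$ annihilates both $E^-$ and the direction $u$; hence $\Phi'(w)w=s\,\Phi'(w)u+\Phi'(w)v=0$, i.e.\ $w\in\mathcal{M}$. Conversely, let $w_1,w_2\in\mathcal{M}\cap\hat E(u)$. By Lemma~\ref{lem4}(ii), $w_i^+\neq 0$; writing $w_i=s_i u+v_i\in E^-\oplus\mathbb R^+u$ forces $s_i>0$, so $\hat E(w_i)=E^-\oplus\mathbb R^+(s_i u)=\hat E(u)$. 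Proposition~\ref{prop1} then says each $w_i$ is the unique global maximum of $\Phi|_{\hat E(w_i)}=\Phi|_{\hat E(u)}$, forcing $w_1=w_2=:\hat m(u)$, which completes the proof.

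\textbf{Anticipated obstacle.} The delicate point is the weak upper semicontinuity of $\Phi$ on bounded subsets of the infinite-dimensional set $\hat E(u)$, which is not norm-compact. The rescue combines the nonnegativity $F\ge 0$ from (\ref{uf(x,u)}), enabling Fatou's lemma on the nonlinear subtracted term, with the identification $E\sim\ell^2(\mathbb V)$, which upgrades weak convergence on the lattice to pointwise convergence via the test functions $\delta_x\in C_c(\mathbb V)$.
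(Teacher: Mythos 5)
Your proof is correct and follows essentially the same route as the paper: normalize to $u\in S^+$, bound $\sup_{\hat E(u)}\Phi$ via Lemma \ref{lem4}(i) and Lemma \ref{lem5}, extract a weakly convergent maximizing sequence, use Fatou's lemma together with weak lower semicontinuity of the norm to pass to the limit, identify the maximizer as a critical point of $\Phi|_{E(u)}$ and hence an element of $\mathcal{M}$, and derive uniqueness from Proposition \ref{prop1}. Your explicit argument that $s>0$ (because $\Phi\le 0$ on $E^-$ while the supremum is positive) is in fact a slightly more careful justification of the nontriviality of the limit than the paper's appeal to weak closedness alone.
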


\begin{proof}
By Proposition \ref{prop1}, $\hat{m}(u) \in \mathcal{M}$ is the unique global maximum of $\Phi|_{\hat{E}(\hat{m}(u))}$. Moreover,  $\hat{E}(u)=\hat{E}(\hat{m}(u))$ by definition. So, it suffices to show that $\mathcal{M} \cap \hat{E}(u) \neq \emptyset$. \\
	For each $u\in E\backslash E^{-}$, since $\hat{E}(u) = \hat{E}(u^{+})=\hat{E}\big(\tfrac{u^{+}}{\|u^{+}\|}\big),$
	we may assume that $u\in E^{+}$ and $\|u\|=1$.
By Lemma \ref{lem4}(i), there exists $t>0$ small enough such that $\Phi(tu)>0$ . According to Lemma \ref{lem5}, $\Phi \leq 0$ on $\hat{E}(u)\backslash B_{R}(0)$ for some $R>0$. It follows that $0<\sup_{\hat{E}(u)} \Phi <\infty.$ \\
Set $\{u_{n}\} \subset \hat{E}(u) \cap B_{R}(0) \text{ with }  \Phi(u_{n}) \rightarrow \sup_{\hat{E}(u)} \Phi .$
 Then, up to a subsequence, we have $u_{n} \rightharpoonup u_{0}$.  Since $\hat{E}(u)$ is weakly closed as a closed convex subset of Banach space $E$, $u_{0} \in \hat{E}(u)\backslash \{0\}$. By Fatou's lemma and the dominated convergence theorem, $\Phi$ is weakly upper semicontinuous on $\hat{E}(u)$, i.e., 
$\lim_{n \rightarrow \infty} \Phi(u_{n}) \leq \Phi(u_{0})$ for $u_{n}\rightharpoonup u_{0}$. So, $\Phi(u_{0})=\sup_{\hat{E}(u)}\Phi$.  $u_{0}$ is a critical point of $\Phi|_{E(u)}$ implies $\Phi'(u_{0})u_{0}=\Phi'(u_{0})v=0$ for any $v \in E(u)$.  It follows that $u_{0}\in \mathcal{M}\cap \hat{E}(u)$.
\end{proof}

\begin{remark}
(i)	We have proved a new characterization for the least positive energy of $\Phi$ on $\mathcal{M}$ :
	$$
	0 < c = \inf_{\mathcal{M}} \Phi = \inf_{w\in E^{+} \backslash \{0\}} \sup_{u\in\hat{E}(w)} \Phi(u).
	$$
(ii)Define 
\begin{displaymath}
	\begin{aligned}
		\widehat{m} : E^{+}\backslash\{0\} &\to \mathcal{M} \\
		u &\mapsto 	\widehat{m}(u).
	\end{aligned}
\end{displaymath}
We'll prove in the following that $\hat{m}$ is a continuous map.
\end{remark}

\begin{proposition}\label{prop3}
	$\Phi$ is coercive on $\mathcal{M}$, i.e., $\Phi(u) \rightarrow \infty \text{ as } \|u\| \rightarrow \infty, u\in \mathcal{M}$.
\end{proposition}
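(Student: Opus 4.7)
The plan is to argue by contradiction: suppose there is a sequence $\{u_n\} \subset \mathcal{M}$ with $\|u_n\| \to \infty$ and $\Phi(u_n) \leq d$ for some constant $d$. By Lemma \ref{lem4}(ii), $\|u_n^+\|^2 \geq \|u_n\|^2/2$, so the normalizations $v_n := u_n/\|u_n\|$ and $w_n := u_n^+/\|u_n^+\|$ are sequences of unit $\|\cdot\|$-norm with $\|u_n^+\|/\|u_n\| \in [1/\sqrt 2, 1]$. The argument then splits according to whether $\|w_n\|_\infty \to 0$.

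\emph{Vanishing case.} If $\|w_n\|_\infty \to 0$ along a subsequence, the interpolation $\|w_n\|_p^p \leq \|w_n\|_\infty^{p-2}\|w_n\|_2^2$ combined with the equivalence of $\|\cdot\|_2$ and $\|\cdot\|$ on $E$ gives $\|w_n\|_p \to 0$ for every $p > 2$. Since $Rw_n \in \mathbb{R}^+ u_n^+ \subset \hat{E}(u_n)$ for each $R > 0$, Proposition \ref{prop1} yields
$$d \;\geq\; \Phi(u_n) \;\geq\; \Phi(Rw_n) \;=\; \frac{R^2}{2} - \int_{\mathbb{V}} F(x, Rw_n)\,d\mu.$$
Bounding the last integral by $\tfrac{\epsilon R^2}{2}\|w_n\|_2^2 + \tfrac{C_\epsilon R^p}{p}\|w_n\|_p^p$ via Lemma \ref{lem2}, the $\ell^p$-term vanishes as $n \to \infty$ and the $\ell^2$-term is at most $C\epsilon R^2$; choosing $\epsilon$ small and then $R$ large forces $d \geq R^2/4$, a contradiction.

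\emph{Non-vanishing case.} Otherwise, along a subsequence there exist $\delta > 0$ and $y_n \in \mathbb{V}$ with $|w_n(y_n)| \geq \delta$. Write $y_n = T z_n + r_n$ with $z_n \in \mathbb{Z}^N$ and $r_n \in \{0, \dots, T-1\}^N$ and extract so that $r_n \equiv r_\ast$. Because $V$ and $f$ are $T$-periodic, $L = -\Delta + V$ and its spectral projections commute with the shift $x \mapsto x + Tz_n$, so the translated sequence $\tilde{u}_n(\cdot) := u_n(\cdot + Tz_n)$ still lies in $\mathcal{M}$ with $\Phi(\tilde{u}_n) = \Phi(u_n)$, $\|\tilde{u}_n\| = \|u_n\|$ and $(\tilde{u}_n)^\pm = \widetilde{u_n^\pm}$. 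Defining $\tilde{v}_n, \tilde{w}_n$ by the same formulas applied to $\tilde{u}_n$, one has $|\tilde{w}_n(r_\ast)| = |w_n(y_n)| \geq \delta$; extracting again, $\tilde{w}_n \rightharpoonup \tilde{w}$ in $E$, which is also pointwise convergence since $\delta_{r_\ast} \in E$, so $|\tilde{w}(r_\ast)| \geq \delta$. Because $\tilde{v}_n^+ = (\|\tilde{u}_n^+\|/\|\tilde{u}_n\|)\tilde{w}_n$ with coefficient in $[1/\sqrt 2, 1]$, the weak limit satisfies $\tilde{v}^+ \neq 0$, hence $\tilde{v} \neq 0$ in $E$; pick $x_0$ with $\tilde{v}(x_0) \neq 0$. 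Then $|\tilde{u}_n(x_0)| = \|\tilde{u}_n\||\tilde{v}_n(x_0)| \to \infty$, and $(S_4)$ gives
$$\frac{F(x_0, \tilde{u}_n(x_0))}{\|\tilde{u}_n\|^2} \;=\; \tilde{v}_n(x_0)^2 \cdot \frac{F(x_0, \tilde{u}_n(x_0))}{\tilde{u}_n(x_0)^2} \;\longrightarrow\; \infty.$$
Since $F \geq 0$ by (\ref{uf(x,u)}), Fatou's lemma forces $\int_\mathbb{V} F(x, \tilde{u}_n)/\|\tilde{u}_n\|^2\,d\mu \to \infty$. On the other hand, the identity $\int F(x, \tilde{u}_n)/\|\tilde{u}_n\|^2\,d\mu = \tfrac12(\|\tilde{v}_n^+\|^2 - \|\tilde{v}_n^-\|^2) - \Phi(\tilde{u}_n)/\|\tilde{u}_n\|^2$ together with $\Phi(\tilde{u}_n) \geq c > 0$ from Lemma \ref{lem4}(i) shows the same quantity is at most $\tfrac12$, a contradiction.

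The main obstacle is the non-vanishing case: on $\mathbb{Z}^N$ the embedding $E \hookrightarrow \ell^p$ is continuous but not compact, so a unit-norm sequence may weakly converge to $0$ with no mass concentrating at any fixed vertex. The $T$-periodic translation trick restores a nontrivial weak limit, but crucially depends on shifts by $T\mathbb{Z}^N$ preserving both $\mathcal{M}$ and the spectral splitting $E = E^+ \oplus E^-$, both of which follow from the $T$-periodicity of $V$ and $f$.
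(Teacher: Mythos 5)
Your proof is correct and follows essentially the same route as the paper: contradiction, the vanishing/non-vanishing dichotomy for the normalized positive parts, Proposition \ref{prop1} plus Lemma \ref{lem2} to rule out vanishing, and $T$-periodic translations with pointwise (weak) convergence, $(S_{4})$ and Fatou to rule out non-vanishing. The only (harmless) variations are cosmetic — you dichotomize on $\|w_{n}\|_{\infty}$ rather than $\|v_{n}^{+}\|_{p}$ and you locate the point $x_{0}$ with $\tilde v(x_{0})\neq 0$ a bit more carefully than the paper does.
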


\begin{proof}
	Suppose the proposition does not hold and let $\{u_{n}\} \subset \mathcal{M}$ with $$\|u_{n}\| \rightarrow \infty \text{ and } \Phi(u_{n}) \leq d $$
 for some $d\geq c$. Set $v_{n} :=\frac{u_{n}}{\|u_{n}\|}$, 
we may assume that, after passing to a subsequence, $v_{n}\rightharpoonup v$ and $v_{n}(x) \rightarrow v(x), \forall x\in \mathbb{V}$. Suppose $v_{n}^{+} \rightarrow 0$ in $l^{p}(\mathbb{V})$, $p>2$. By Lemma \ref{lem2}, $\int_{\mathbb{V}} F(x,sv_{n}^{+}) d\mu \rightarrow 0$ as $n\rightarrow \infty$ for every $s>0$. It follows that
\begin{equation}\label{^}
	\Phi(sv_{n}^{+}) = \dfrac{1}{2} s^{2}\|v_{n}^{+}\|^{2} - \int_{\mathbb{V}} F(x,sv_{n}^{+}) d\mu \geq \dfrac{1}{4} s^{2} \quad \text{ as } n \rightarrow \infty.
\end{equation}
Moreover, by Proposition \ref{prop1},
\begin{equation}\label{^*}
	\Phi(sv_{n}^{+}) \leq \Phi(u_{n}) \leq d.
\end{equation}
 (\ref{^}) contradicts (\ref{^*}) whenever $s>2\sqrt{d}$. It follows that $v_{n}^{+} \nrightarrow 0$ in $l^{p}(\mathbb{V})$, i.e., $$\varlimsup_{n} \|v_{n}^{+}\|_{p} = \beta$$ for some $\beta >0$. By interpolation inequality, 
 $$
 \beta = \limsup_{n}\|v_{n}^{+}\|_{p} \leq \limsup_{n}\|v_{n}^{+}\|_{2}^{\frac{2}{p}} \cdot \|v_{n}^{+}\|_{\infty}^{\frac{p-2}{p}}.
 $$
 Since $\|v_{n}^{+}\|_{2} \leq C\|v_{n}^{+}\| \leq A$ for some constant $A>0$, 
 $$
 \limsup_{n} \|v_{n}^{+}\|_{\infty} \geq \big(\frac{\beta^{p}}{A^{2}}\big)^{\frac{1}{p-2}} >0.
 $$
 Therefore, there exists a subsequence $\{v_{n}\}$ and a sequence $\{y_{n}\} \subset \mathbb{V}$ such that $|v_{n}^{+}(y_{n})| \geq \big(\frac{\beta^{p}}{A^{2}}\big)^{\frac{1}{p-2}} $ for each $n$. 
  By translations, set $\tilde{v}_{n}(y):=v_{n}(y+k_{n}T)$ with $k_{n}=(k^{1}_{n},\dots k^{N}_{n})$ to ensure that $\{y_{n}-k_{n}T\} \subset \Omega$ where $\Omega=[0,T)^{N}\bigcap \mathbb{V}$ is a bounded domain in $\mathbb{V}$. For each $n$,
 $$\|\tilde{v}_{n}^{+}\|_{\ell^{\infty}(\Omega)} \geq |v_{n}^{+}(y_{n})| \geq \big(\frac{\beta^{p}}{A^{2}}\big)^{\frac{1}{p-2}}  >0.$$
  Correspondingly, translate $u_{n}$ to  $\tilde{u}_{n}(y):=\|u_n\|\tilde{v}_{n}(y) \equiv u_{n}(y+k_{n}T)$. Since $\Omega$ is bounded, there exists at least one point, say $x_{0}\in \Omega $, such that $\tilde{v}_{n}(x_{0}) \to \tilde{v}(x_{0}) >0$ and $|\tilde{u}_{n}(x_{0})|=\|\tilde{u}_{n}\||\tilde{v}_{n}(x_{0})| \to \infty$ as $n \to \infty$. By $(S_{4})$, 
  $$\int_{\mathbb{V}} \frac{F(x,\tilde{u}
  	_{n})}{\tilde{u}_{n}^{2}(x)}\tilde{v}_{n}^{2}(x) \rightarrow \infty,\quad n\rightarrow \infty.$$
  Since $V(x)$ and $f(x,u)$ are both T-periodic in $x$, 
 $$
  \Phi(\tilde{u}_{n})=\Phi(u_{n}) \leq d
\text{ and } 
 \|\tilde{u}_{n}\| = \|u_{n}\| \rightarrow \infty \text{ as } n\rightarrow \infty.
 $$
 Note that
  \begin{equation} \label{**}
 	0<\dfrac{\Phi(\tilde{u}_{n})}{\|\tilde{u}_{n}\|^{2}} = \frac{1}{2}\|\tilde{v}_{n}^{+}\|^{2}-\frac{1}{2}\|\tilde{v}_{n}^{-}\|^{2}
 	-\int_{\mathbb{V}} \dfrac{F(x,\tilde{u}_{n})}{\tilde{u}_{n}^{2}(x)}\tilde{v}_{n}^{2}(x) d\mu.
 \end{equation}
This does not hold with $n$ big enough because $\tfrac{1}{2} \leq \|\tilde{v}_{n}^{+}\|^{2} \leq 1$.
\end{proof}
\begin{proposition}  \label{prop4}
	The mapping $\,\widehat{m} : E^{+}\backslash \{0\} \mapsto \mathcal{M}$ in Proposition \ref{prop2} is continuous.
\end{proposition}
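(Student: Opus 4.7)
The plan is to establish continuity at a fixed $u\in E^+\setminus\{0\}$ by taking $u_n\to u$ and showing $v_n:=\widehat m(u_n)\to v:=\widehat m(u)$ in $E$. Since $\widehat E(u)=\widehat E(u/\|u\|)$ forces $\widehat m$ to be $\mathbb R^+$-homogeneous, I would normalise so that $\|u_n\|=\|u\|=1$. Write $v=t_0 u+v^-$ and $v_n=t_n u_n+v_n^-$ with $t_0,t_n>0$ and $v^-,v_n^-\in E^-$; the goal is then to show $t_n\to t_0$ and $v_n^-\to v^-$.

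First I would extract a weak subsequential limit. The set $\mathcal V:=\{u_n\}\cup\{u\}\subset E^+\setminus\{0\}$ is compact, so Lemma \ref{lem5} furnishes $R>0$ with $\Phi\le 0$ on $E(u_n)\setminus B_R(0)$ for every $n$; since $\Phi(v_n)>0$ by Lemma \ref{lem4}(i), this forces $\|v_n\|\le R$. Passing to a subsequence, $t_n\to t\ge 0$ and $v_n^-\rightharpoonup \tilde v^-\in E^-$, so $v_n\rightharpoonup v_\infty:=tu+\tilde v^-\in\widehat E(u)$ and $v_n\to v_\infty$ pointwise (testing the weak limit against $\delta_x\in C_c(\mathbb V)$).

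Next I would sandwich $\Phi(v_n)$. For the lower bound, $v_n$ is the global maximum of $\Phi$ on $\widehat E(u_n)$ by Proposition \ref{prop2}, so using the competitor $t_0 u_n+v^-\in\widehat E(u_n)$ together with continuity of $\Phi$,
\[
\liminf_n\Phi(v_n)\ge \lim_n\Phi(t_0u_n+v^-)=\Phi(v).
\]
For the upper bound I would prove weak upper semicontinuity along $\{v_n\}$: $v_n^+=t_nu_n\to tu$ strongly gives $\|v_n^+\|^2\to\|v_\infty^+\|^2$; weak lower semicontinuity of the Hilbert norm on $E^-$ gives $\liminf\|v_n^-\|^2\ge\|\tilde v^-\|^2$; and Fatou's lemma (using $F\ge 0$ from (\ref{uf(x,u)})) with pointwise convergence gives $\liminf\int_{\mathbb V}F(x,v_n)\,d\mu\ge\int_{\mathbb V}F(x,v_\infty)\,d\mu$. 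These combine to $\limsup_n\Phi(v_n)\le\Phi(v_\infty)\le\Phi(v)$, where the last step uses $v_\infty\in\widehat E(u)$ and the maximum characterisation in Proposition \ref{prop2}. Equality throughout forces $\Phi(v_\infty)=\Phi(v)>0$, which rules out $t=0$ (since $\Phi\le 0$ on $E^-$) and hence makes Proposition \ref{prop2}'s uniqueness applicable: $v_\infty=v$, i.e.\ $t=t_0$ and $\tilde v^-=v^-$.

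Finally I would upgrade to strong convergence. From $t_n\to t_0$ and $u_n\to u$ in norm, $v_n^+\to v^+$ strongly. For $v_n^-$, combining $\Phi(v_n)\to\Phi(v)$ with $\|v_n^+\|^2\to\|v^+\|^2$ yields
\[
\tfrac12\|v_n^-\|^2+\int_{\mathbb V}F(x,v_n)\,d\mu\;\longrightarrow\;\tfrac12\|v^-\|^2+\int_{\mathbb V}F(x,v)\,d\mu;
\]
since each summand is already bounded below by the corresponding limit (weak l.s.c.\ and Fatou), both must attain equality, so $\|v_n^-\|\to\|v^-\|$; combined with $v_n^-\rightharpoonup v^-$ in the Hilbert space $E^-$ this gives $v_n^-\to v^-$ in norm. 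A standard subsequence argument removes the dependence on the chosen subsequence. The main obstacle I expect is the ``no-loss-of-mass'' step hidden in identifying $v_\infty=v$: the crucial point is that $t>0$ in the limit, so that $v_\infty\notin E^-$ and Proposition \ref{prop2} can be invoked, and it is the positivity of the ground-state energy $c$ together with $\Phi|_{E^-}\le 0$ that rescues this.
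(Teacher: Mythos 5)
Your argument is correct and follows essentially the same route as the paper's proof: bound $\widehat m(u_n)$, extract a weak subsequential limit, and sandwich $\Phi(\widehat m(u_n))$ between $\Phi(\widehat m(u))$ from below (maximality on $\widehat E(u_n)$) and from above (Fatou plus weak lower semicontinuity of the norm on $E^-$, then maximality on $\widehat E(u)$), forcing equality throughout and hence norm convergence of the $E^-$-components. The only cosmetic differences are that you obtain boundedness of $\widehat m(u_n)$ directly from Lemma \ref{lem5} together with $\Phi\geq c>0$ on $\mathcal M$ rather than via the coercivity of Proposition \ref{prop3}, and your competitor $t_0u_n+v^-$ in the lower bound converges exactly to $\widehat m(u)$, which is slightly cleaner than the paper's choice of $t_nu_n+\widehat m(u)^-$.
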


\begin{proof}
	Let $u_{n} \rightarrow u$ in $E^{+} \backslash \{0\}$, we show that there exists a subsequence $\hat{m}(u_{n}) \rightarrow \hat{m}(u)$. We may assume that $\|u_{n}\| = \|u\| = 1$ and then, $\hat{m}(u_{n})=\|\hat{m}(u_{n})^{+}\|u_{n}+\hat{m}(u_{n})^{-}$. By Lemma \ref{lem5}, there is $R>0$ such that, for each $n \in \mathbb{N}$,
	\begin{equation}\label{key}
		\Phi(\hat{m}(u_{n})) = \sup_{\hat{E}(u_{n})} \Phi = \sup_{\hat{E}(u_{n})\cap B_{R} }\Phi \leq \sup_{u \in \hat{E}(u_{n})\cap B_{R} } \|u^{+}\|^{2} \leq R^{2}.
	\end{equation}
Since $\Phi$ is coercive on $\mathcal{M}$ by Proposition \ref{prop3}, $\{ \hat{m}(u_{n}) \}$ is bounded. Passing to a subsequence, we may assume that 
$$
t_{n}:=\| \hat{m}(u_{n})^{+} \| \rightarrow t \quad \text{ and  } \quad  \hat{m}(u_{n})^{-} \rightharpoonup u_{*}^{-} \text{ in }  E^{-},
$$
where
$0<\sqrt{2c} \leq t \leq \sup_{n}\|\hat{m}(u_{n})\|$.
Moreover, by Proposition \ref{prop2},
$$
\Phi(\hat{m}(u_{n})) \geq \Phi(t_{n}u_{n}+\hat{m}(u)^{-}) \rightarrow \Phi(tu+\hat{m}(u)^{-}) = \Phi(\hat{m}(u)).
$$
By Fatou's lemma and the weak lower semicontinuity of the norm, we have 
\begin{displaymath}
	\begin{aligned}
		&\Phi\big(\hat{m}(u)\big)
		 \leq \lim_{n \rightarrow \infty} \Phi \big(\hat{m}(u_{n})\big) \\
		 &= \lim_{n \rightarrow \infty}  \Big( \frac{1}{2}t_{n}^{2} - \frac{1}{2}\| \hat{m}(u_{n})^{-} \|^{2} - \int_{\mathbb{V}} F\big(x,t_{n}u_{n}+\hat{m}(u_{n})^{-}\big) d \mu \Big)\\
		 &\leq \frac{1}{2}t^{2} - \frac{1}{2}\| u_{*}^{-} \|^{2} - \int_{\mathbb{V}} F(x,tu+u_{*}^{-}) d \mu\\
		 &=\Phi\big(tu+u_{*}^{-} \big) \leq \Phi \big(\hat{m}(u)\big).
	\end{aligned}
\end{displaymath}
It follows that all the inequalities above must be equalities. Hence, $\| \hat{m}(u_{n})^{-}\| \rightarrow \| u_{*}^{-} \|$ and then, $\hat{m}(u_{n})^{-} \rightarrow u_{*}^{-}=\hat{m}(u)^{-},\quad  \hat{m}(u_{n}) \rightarrow \hat{m}(u)$  in $E$. 
\end{proof}

Define the functional $\hat{\Psi}:  E^{+} \setminus\{0\} \to \mathbb{R} $ as $\widehat{\Psi}(w) := \Phi(\widehat{m}(w))$
and $\Psi:=\widehat{\Psi } |_{S^{+}} $.  We know that, from Proposition \ref{prop4}, $\hat{\Psi}$ is continuous.

\begin{proposition}\label{prop 5}
	$\hat{\Psi} \in C^{1}\big( E^{+}\backslash\{0\},\mathbb{R} \big)$
	and 
	$$  \hat{\Psi } '(w)z= \frac{\| \hat{m}(w)^{+} \|}{\|w\|}\Phi '\big(\hat{m}(w) \big)z, 
	  \quad \forall w, z\in E^{+} , w \neq 0. $$
\end{proposition}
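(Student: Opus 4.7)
The plan is to compute the directional derivative of $\hat{\Psi}$ using the Szulkin--Weth sandwich argument: bound $\hat{\Psi}(w+tz)-\hat{\Psi}(w)$ from above and below by two quantities that both converge, after dividing by $t$, to the claimed limit. Since $\hat{m}(w)\in\mathcal M$ maximises $\Phi$ on $\hat E(w)$ by Proposition \ref{prop2}, each side of the sandwich is obtained by evaluating $\Phi$ at an admissible element of $\hat E(w)$ or $\hat E(w+tz)$.

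Fix $w\in E^{+}\setminus\{0\}$ and $z\in E^{+}$. For $|t|$ sufficiently small, set $w_t:=w+tz$, so that $w_t\in E^{+}\setminus\{0\}$. Write
\[
\hat m(w)=s_0\,w+v_0,\qquad \hat m(w_t)=s_t\,w_t+v_t,
\]
with $s_0,s_t>0$ and $v_0,v_t\in E^{-}$; here $s_0=\|\hat m(w)^{+}\|/\|w\|$, and Lemma \ref{lem4}(ii) together with Proposition \ref{prop4} guarantees $s_t>0$ for all small $t$ and $s_t\to s_0$, $v_t\to v_0$ in $E$ as $t\to 0$. Since $s_0 w_t+v_0\in\hat E(w_t)$ and $s_t w+v_t\in\hat E(w)$, Proposition \ref{prop2} yields
\[
\Phi(s_0 w_t+v_0)\le\Phi\bigl(\hat m(w_t)\bigr),\qquad \Phi(s_t w+v_t)\le\Phi\bigl(\hat m(w)\bigr).
\]
Subtracting and rearranging,
\[
\Phi(\hat m(w)+s_0 tz)-\Phi\bigl(\hat m(w)\bigr)\le\hat{\Psi}(w_t)-\hat{\Psi}(w)\le\Phi(s_t w+v_t+s_t tz)-\Phi(s_t w+v_t).
\]

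Applying the mean value theorem to both ends, divide by $t>0$ (the case $t<0$ is analogous) and use that $\Phi'$ is continuous on $E$ together with $s_t\to s_0$, $v_t\to v_0$: the lower bound tends to $s_0\Phi'(\hat m(w))z$, and the upper bound tends to $s_0\Phi'(s_0 w+v_0)z=s_0\Phi'(\hat m(w))z$. Hence the Gâteaux derivative exists and
\[
\hat{\Psi}'(w)z=s_0\,\Phi'(\hat m(w))z=\frac{\|\hat m(w)^{+}\|}{\|w\|}\,\Phi'(\hat m(w))z.
\]
This expression is linear in $z$ and bounded (since $\Phi'(\hat m(w))\in E^{*}$), so it defines an element of $(E^{+})^{*}$. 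Continuity of the map $w\mapsto \hat{\Psi}'(w)$ follows from continuity of $\hat m$ (Proposition \ref{prop4}), of $w\mapsto\|\hat m(w)^{+}\|/\|w\|$, and of $\Phi'$ on $E$. Consequently the Gâteaux derivative is continuous, which upgrades $\hat{\Psi}$ to $C^{1}(E^{+}\setminus\{0\},\mathbb R)$.

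The main subtlety is justifying that $s_t$ remains positive (and bounded away from $0$) for small $t$, so that the test elements $s_0 w_t+v_0$ and $s_t w+v_t$ lie in the correct cones $\hat E(w_t)$ and $\hat E(w)$; this is exactly what Lemma \ref{lem4}(ii) and the continuity of $\hat m$ from Proposition \ref{prop4} give, and everything else is routine once the sandwich is in place.
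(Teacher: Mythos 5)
Your proposal is correct and follows essentially the same route as the paper: the same Szulkin--Weth sandwich using the maximality of $\hat m(w)$ on $\hat E(w)$ from Proposition \ref{prop2}, the mean value theorem on both bounds, and the continuity of $t\mapsto s_t$, $t\mapsto v_t$ from Proposition \ref{prop4} to pass to the limit. The concluding step (continuity of the G\^ateaux derivative in $w$ upgrading to $C^1$) also matches the paper's argument.
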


\begin{proof}
	For $ w, z\in E^{+} $, choose $\delta >0$ small enough such that, for any $ |t| < \delta $, $w_{t} := w+tz \in E^{+} \backslash \{0\} $. Set $u_{t} :=\hat{m}(w_{t}) \in \mathcal{M}$ and then, $u_{t}=s_{t}w_{t}+u_{t}^{-}$ where 
	$s_{t}=\frac{\|u_{t}^{+}\|}{\|w_{t}\|} >0.$ Moreover, Proposition \ref{prop4} implies that the mapping $s: (-\delta,\delta) \to \mathbb{R}, \, t\mapsto s_{t} $ is continuous.
	By Proposition \ref{prop2} and the mean value theorem, there exist $\tau_{t}, \eta_{t} \in (0,1)$ such that
	\begin{displaymath}
		\begin{aligned}
		    \hat{\Psi}(w_{t})-\hat{\Psi}(w) &= \Phi(u_{t})-\Phi(u)
		    =\Phi(s_{t}w_{t}+u_{t}^{-})-\Phi(s_{0}w+u^{-})\\
		    &\leq \Phi(s_{t}w_{t}+u_{t}^{-}) - \Phi(s_{t}w+u_{t}^{-}) \\
		    &=\Phi'\big(s_{t}[w+\tau_{t}(w_{t}-w)]+u_{t} \big) s_{t}(w_{t}-w)\\
		    &=s_{0}\Phi'(u)tz + o(t) \, \text{ as } t\to0
		\end{aligned}
	\end{displaymath}
and
		\begin{displaymath}
		\begin{aligned}
			\hat{\Psi}(w_{t})-\hat{\Psi}(w) 
			&\geq \Phi(s_{0}w_{t}+u^{-}) - \Phi(s_{0}w+u^{-}) \\
			&=\Phi'\big(s_{0}[w+\eta_{t}(w_{t}-w)]+u_{t} \big) s_{0}(w_{t}-w)\\
			&=s_{0}\Phi'(u)tz + o(t) \, \text{ as } t\to0.
		\end{aligned}
	\end{displaymath}
Hence, we have
$$
\partial_{z} \hat{\Psi }(w) = \lim_{t \to 0} \frac{\hat{\Psi }(w_{t})-\hat{\Psi }(w)}{t}=s_{0}\Phi'(u)z = \dfrac{\| \hat{m}(w)^{+} \|}{\| w \| } \Phi'(\hat{m}(w))z.
$$
Note that $ \partial_{z}\hat{\Psi }(w) $ is a continuous linear functional about $z$ and continuously depends on $w$. Therefore, the proposition holds.
\end{proof}
Consider$$ m:=\hat{m}|_{S^{+}} ,\quad \Psi = \Phi \circ m \equiv \widehat{\Psi} \big|_{S},$$ 
where $ S^{+}:=\{ u\in E^{+}: \|u\|=1 \}$. Then, $m:  S^{+} \to \mathcal{M}$ is homeomorphic with its inverse given by 
\begin{displaymath}
	\begin{aligned}
		\check{m}: \mathcal{M} &\to S^{+} \\
	                     	u&\mapsto \check{m}(u)=\frac{u^{+}}{\|u^{+}\|}.
	\end{aligned}
\end{displaymath}
Moreover, we have the following corollaries.

\begin{corollary}\label{cor6}
	(i) $\Psi \in C^{1} (S^{+} ) $ and for every $ w\in S^{+}$,
	$$
	\Psi'(w)z = \| \hat{m}(w)^{+} \| \Phi' \big( \hat{m}(w) \big)z, \quad
	\forall z\in T_{w}S^{+}:=\{ v\in E^{+}: \langle w,v \rangle=0 \}.
	$$
	(ii) $\{w_{n}\}_{n}$ is a Palais-Smale sequence for $\Psi$ if and only if $ \{ \hat{m}(w_{n})\}_{n} $ is a Palais-Smale sequence for $\Phi$ .\\
	(iii) We have $$
	\inf_{S^{+}} \Psi = \inf_{\mathcal{M}} \Phi =c,
	$$
	and $w\in S^{+}$ is a critical point of $\Psi$ if and only if  $\hat{m}(w) \in \mathcal{M}$ is a critical point of $\Phi$.
\end{corollary}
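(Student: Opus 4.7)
The plan is to deduce all three parts from Proposition \ref{prop 5} combined with the defining properties of the generalized Nehari manifold $\mathcal{M}$. For (i), since $S^{+}$ is a smooth codimension-one submanifold of the Hilbert space $E^{+}$ with tangent space $T_{w}S^{+}$ as stated, the restriction $\Psi = \hat{\Psi}|_{S^{+}}$ inherits $C^{1}$ regularity from Proposition \ref{prop 5}. For $w\in S^{+}$ and $z\in T_{w}S^{+}$, Proposition \ref{prop 5} evaluated at the unit vector $w$ (so that the factor $1/\|w\|$ equals $1$) yields exactly the claimed formula $\Psi'(w)z = \|\hat{m}(w)^{+}\|\,\Phi'(\hat{m}(w))z$.

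For (iii), the infimum equality is immediate from the homeomorphism $m:S^{+}\to\mathcal{M}$ established earlier together with Proposition \ref{prop2}: $\inf_{S^{+}}\Psi = \inf_{w\in S^{+}}\Phi(m(w)) = \inf_{\mathcal{M}}\Phi = c$. For the critical point correspondence, the key observation is that for $u=\hat{m}(w)\in\mathcal{M}$, the functional $\Phi'(u)$ already annihilates $E^{-}$ by the definition of $\mathcal{M}$, and it also annihilates $u^{+}$ since $\Phi'(u)u=0$ and $\Phi'(u)u^{-}=0$ force $\Phi'(u)u^{+}=0$. Because $u^{+}=\|u^{+}\|w$ and $E=E^{-}\oplus \mathbb{R}w \oplus T_{w}S^{+}$, the full derivative $\Phi'(u)$ is determined by its restriction to $T_{w}S^{+}$. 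Combining this with (i) and the strict positivity $\|u^{+}\|\geq \sqrt{2c}>0$ from Lemma \ref{lem4}(ii) yields $\Psi'(w)=0 \Longleftrightarrow \Phi'(u)|_{T_{w}S^{+}}=0 \Longleftrightarrow \Phi'(u)=0$.

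Part (ii) requires a bit more care. Decomposing an arbitrary $v\in E$ as $v=\alpha w+z+v^{-}$ with $z\in T_{w}S^{+}$ and $v^{-}\in E^{-}$ and invoking the two vanishing properties above shows $\Phi'(u)v=\Phi'(u)z$, so
\[
\|\Phi'(\hat{m}(w))\|_{E^{*}} = \|\Phi'(\hat{m}(w))|_{T_{w}S^{+}}\|_{T_{w}^{*}S^{+}}.
\]
By (i), this gives the crucial identity
\[
\|\Psi'(w)\|_{T_{w}^{*}S^{+}} = \|\hat{m}(w)^{+}\|\cdot\|\Phi'(\hat{m}(w))\|_{E^{*}}.
\]
With this identity in hand, the PS equivalence reduces to controlling the scalar $\|\hat{m}(w_{n})^{+}\|$ from above and below along a PS sequence. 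Lemma \ref{lem4}(ii) supplies the uniform lower bound $\sqrt{2c}$, and boundedness of $\Psi(w_{n})=\Phi(\hat{m}(w_{n}))$ together with the coercivity of $\Phi$ on $\mathcal{M}$ (Proposition \ref{prop3}) provides the upper bound on $\|\hat{m}(w_{n})\|$ and hence on $\|\hat{m}(w_{n})^{+}\|$.

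The main obstacle is precisely the norm identification above: one must carefully track the orthogonal decomposition of $E$ relative to the moving basepoint $w$ and verify that nothing is lost when passing between $T_{w}^{*}S^{+}$ and $E^{*}$. Once this is in place, both directions of the PS equivalence, the infimum formula, and the critical point correspondence follow at once.
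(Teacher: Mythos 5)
Your proposal is correct and follows essentially the same route as the paper: both arguments hinge on the orthogonal splitting $E = E(\hat{m}(w)) \oplus T_{w}S^{+}$ together with the fact that $\Phi'(\hat{m}(w))$ vanishes on $E(\hat{m}(w))$ (so the dual norm of $\Phi'$ is realized on $T_{w}S^{+}$), combined with the two-sided bounds $\sqrt{2c} \leq \|\hat{m}(w_{n})^{+}\| \leq \sup_{n}\|\hat{m}(w_{n})\| < \infty$ from Lemma \ref{lem4}(ii) and Proposition \ref{prop3}. The norm identification you flag as the main obstacle is exactly the step the paper carries out by observing $\nabla\Phi(u_{n}) \in T_{w_{n}}S^{+}$, so there is no gap.
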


\begin{proof}
	(i) This is a direct corollary of Proposition \ref{prop 5}.\\
	(ii)Let $\{ w_{n} \} \subset S^{+}$ be a sequence such that $ C:= \sup_{n} \Psi(w_{n}) = \sup_{n} \Phi\big(u_{n}\big) < \infty$, where $ u_{n} := m(w_{n})\in \mathcal{M}$. For each $n$, we have an orthogonal splitting
	$$
	E=E^{-} \oplus \mathbb{R}(w_{n}) \oplus T_{w_{n}}S^{+} = E(w_{n}) \oplus T_{w_{n}}S^{+} = E(u_{n}) \oplus T_{w_{n}}S^{+} .
	$$
 $ u_{n} \in \mathcal{M} $ implies that $ \Phi'(u_{n})v=0$ for all $v\in E(u_{n})$ and then, $ \nabla \Phi (u_{n}) \in T_{w_{n}}S^{+}$. So,
$$
\| \Phi'(u_{n}) \| = \sup_{\substack{ z\in T_{w_{n}}S^{+} \\ \|z\|=1 }} \big| \Phi'(u_{n})z \big| .
$$
By (i), 
$$
\| \Psi'(w_{n}) \|
 = \sup_{\substack{ z\in T_{w_{n}}S^{+} \\ \|z\|=1  }} 
\big| \Psi'(w_{n})z \big|
=\sup_{\substack{ z\in T_{w_{n}}S^{+} \\ \|z\|=1 }}
 \| u_{n}^{+} \| \big| \Phi'(u_{n})z \big|.
$$
By Lemma \ref{lem4}(ii) and Proposition \ref{prop3}, $ 0<\sqrt{2c} \leq \|u_{n}^{+}\| \leq \|u_{n}\| < \infty $ for every $n\in\mathbb{N}$. Hence, $ \Psi'(w_{n}) \to 0 $ if and only if $ \Phi'(m(u_{n})) \to 0 $ as $ n\to \infty$.
(iii) One can prove by the definition of $\Psi$ and the same orthogonal spliting of $E$ in (ii).
\end{proof}

\section{The completed proof of Theorem 1.1 }
\begin{proof}[Proof of Theorem \ref*{thm1}]
	If $u_{0}\in \mathcal{M}$ satisfies $\Phi(u_{0})=c$, where $c:=\inf_{\mathcal{M}}	\Phi = \inf_{S^{+}} \Psi$, then $\check{m}(u_{0})\in S^{+}$ is a minimizer of $\Psi$ . Therefore, $\check{m}(u_{0})$ is a critical point of $\Phi$ and by Corollary \ref{cor6}(iii), $u_{0}$ is a critical point of $\Phi$.  Hence, it remains to show the existence of such $u_{0}$. 
	Let $\{w_{n}\} \subset S^{+}$ and $\Psi(w_{n}) \to c$, by Ekeland's Variational Principle, we may assume that $\Psi'(w_{n}) \to 0$ as $n\to \infty$. Put $u_{n}=m(w_{n})$, then $\Phi(u_{n})\to c$ and $\Phi'(u_{n}) \to 0$ as $n \to \infty$. By Proposition \ref*{prop3}, $\{u_{n}\}$ is bounded. We may assume that, after passing to a subsequence,
	$$u_{n} \rightharpoonup u$$
	and
	$$u_{n}(x)\to u(x), \quad \forall x \in \mathbb{V},$$
	as $n\to \infty$ by choosing a test function $\phi(y) = \delta_{x}(y)$.\\
	Suppose $u_{n}^{+} \to 0$ in $\ell^{p}(\mathbb{V}), \, p>2$. 
	Since $\|u\| \simeq \|u\|_{2}$ on $E$, by Lemma \ref{lem2},  we may choose $\epsilon >0 $ small enough such that 
	$$ \Phi'(u_{n})u_{n}^{+} 
	=\|u_{n}^{+}\|^{2} -
	 \int_{\mathbb{V}} f(x,u_{n})u_{n}^{+} d\mu 
    \geq  \frac{1}{2}\|u_{n}^{+}\|^{2} - 
             C_{\epsilon} \|u_{n}^{+}\|_{p}^{p}.
	$$
	Then, $\|u_{n}^{+}\| \to 0$ as $n \to \infty$, which contradicts
    Lemma \ref{lem4}(ii).
	Therefore, 
	$$\varlimsup_{n} \|u_{n}^{+}\|_{p} = \alpha,$$
	for some constant $\alpha >0 $. 
	By interpolation inequality, 
	$$
	\|u_{n}^{+}\|_{p} \leq \|u_{n}^{+}\|_{2}^{\frac{2}{p}} \|u_{n}^{+}\|_{\infty} ^{\frac{p-2}{p}}.
	$$
	Taking the upper limit on both sides, 
	$$
	\alpha=\varlimsup_{n} \|u_{n}^{+}\|_{p} \leq \varlimsup_{n} \|u_{n}^{+}\|_{2}^{\frac{2}{p}} \|u_{n}^{+}\|_{\infty}^{\frac{p-2}{p}}.
	$$
	Since $\{u_{n}\}$ is bounded, it follows that
	$$
	\varlimsup_{n} \|u_{n}^{+}\|_{\infty} \geq K
	$$
	for some constant $K>0$.
	That's to say, there exists a subsequence $\{u_{n}^{+}\}$ and a sequence $\{y_{n}\} \subset \mathbb{V}$ such that $|u_{n}^{+}(y_{n})| \geq K$ for each $n$.
 	For every $y_{n}$, let $k_{n}=(k_{n}^{1}, \cdots, k_{n}^{N}) \in  \mathbb{Z}^{N}$ be a vector such that $\{ y_{n}-k_{n}T\} \subset \Omega$, where $\Omega=[0,T)^{N} \cap \mathbb{V}$ is a finite subset.
 	By translations, we define $\tilde{u}_{n}(y) := u_{n}(y+k_{n}T)$ and then, for each $\tilde{u}_{n}$,
 	$$
 	\| \tilde{u}^{+}_{n} \|_{\ell^{\infty}(\Omega)} \geq |\tilde{u}_{n}^{+}(y_{n}-k_{n}T)| =
 	|u_{n}^{+}(y_{n})| \geq K.
 	$$
	Since $V$ and $f(x,u)$ are both T-periodic in $x$, one can check that $\tilde{u}_{n} \in \mathcal{M}$ and 
	$$
	\Phi(\tilde{u}_{n}) \to c, \quad  \Phi'(\tilde{u}_{n}) \to 0,\quad n \to \infty.
	$$
	Up to a subsequence, we may assume that 
	$$
	\tilde{u}_{n} \rightharpoonup \tilde{u} \text{ and  } 	\tilde{u}_{n}(x) \to \tilde{u}(x), \quad \forall x\in \mathbb{V}.
	$$
	Since $\Omega$ is finite, there exists $x_{0}\in \Omega$ such that $\tilde{u}_{n}^{+}(x_{0}) \to \tilde{u}^{+}(x_{0}) \neq 0$. \\
	Hence, $\tilde{u} \neq 0$.
 Moreover, for any $\varphi \in C_{c}(\mathbb{V})$, 
 \begin{displaymath}
 	\begin{aligned}
 		\langle \nabla \Phi(\tilde{u}_n), \varphi \rangle &=
 		\langle \tilde{u}_{n}, \varphi^{+} \rangle
 		-\langle \tilde{u}_{n}, \varphi^{-} \rangle
 		-\int_{\mathbb{V}} f(x,\tilde{u}_{n}) \varphi d\mu \\
 		& \to 	\langle \tilde{u}, \varphi^{+} \rangle
 		-\langle \tilde{u}, \varphi^{-} \rangle
 		-\int_{\mathbb{V}} f(x,\tilde{u}) \varphi d\mu \\
 		& = \langle \nabla \Phi(\tilde{u}),\varphi \rangle,
 	\end{aligned}
 \end{displaymath}
which implies $\Phi'(\tilde{u})=0$ and $\tilde{u} \in \mathcal{M}$.\\
It remains to show that $\Phi(\tilde{u}) = c$. On one hand, by Fatou's lemma,
\begin{displaymath}
	\begin{aligned}
		c+o(1) 
		&= \Phi(\tilde{u}_{n}) - \frac{1}{2} \Phi'(\tilde{u}_{n})\tilde{u}_{n}
		= \int_{\mathbb{V}} [\frac{1}{2}f(x,\tilde{u}_{n})\tilde{u}_{n} - F(x,\tilde{u}_{n})] d\mu \\
		& \geq \int_{\mathbb{V}}[\frac{1}{2}f(x,\tilde{u})\tilde{u} - F(x,\tilde{u})] d\mu  +  o(1) 
	    = \Phi(\tilde{u}) - \dfrac{1}{2}\Phi'(\tilde{u})\tilde{u}+o(1)\\
	     & = \Phi(\tilde{u}) +o(1) 
	     \quad \text{ as } n \to \infty.
	\end{aligned}
\end{displaymath}
On the other,
 $\Phi(\tilde{u}) \geq c = \inf_{\mathcal{M}} \Phi$. 
\end{proof}

\section{The idea of the proof of Theorem \ref{thm2}}
 At the very beginning, we recall the definition and some important properties of Krasnoselskii genus, see A. Szulkin and T. Weth's work \cite{MR1411681} for example. 
 \begin{definition}
 	Let $E$ be a Banach space. For all closed and symmetric nonempty subsets $A\subset E$, i.e., $ A=-A=\overline{A} \neq\emptyset$, define the Krasnoselskii genus as 
 	\begin{displaymath}
 		\gamma(A)=
 		\left\{
 		        \begin{aligned}
 		        	&\inf\{m\in\mathbb{N}_{+}: \exists h \in C^{0}(A;\mathbb{R}^{m} \backslash \{0\}) , h(-u)=-h(u)\}, \\
 		        	&\infty, \quad  \quad\text{if  } \{\cdots\}=\emptyset, \text{in particular, if }\, 0 \in A,
 		        \end{aligned}
 	    \right.
 	\end{displaymath}
  and define $\gamma(\emptyset)=0$.
   \end{definition}
  \begin{proposition}\label{genus}
  		Let $A,A_{1},A_{2}$ be closed and symmetric subsets of Banach space $E$, then\\
  	(i) $\gamma(A) \geq 0$ and $\gamma(A)=0$ if and only if $A=\emptyset$,\\
  	(ii) $\gamma(A_{1} \cup A_{2}) \leq \gamma(A_{1})+\gamma(A_{2})$,\\
  	(iii) if $f: A \to f(A)$ is odd and continuous, then $\gamma(A) \leq \gamma(\overline{f(A)})$,\\
  	(iv) if $A$ is compact, $0\notin A$ and $A\neq \emptyset$, then there exist open set $U \supset A$ such that $\gamma(\overline{U}) = \gamma(A)$.
  \end{proposition}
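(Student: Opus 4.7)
The plan is to verify each of the four properties essentially from the definition, using Tietze-type odd extensions of continuous maps as the main tool. I will treat the finite-genus cases (the infinite case being trivial since all inequalities are vacuous).

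For \textbf{(i)}, I would argue directly from the definition: $\gamma(\emptyset)=0$ by convention, and if $A\neq\emptyset$ then any candidate $m$ lies in $\mathbb{N}_{+}$, so $\gamma(A)\geq 1$ (or $\gamma(A)=\infty$). Conversely, if $\gamma(A)=0$, no positive integer admits such an $h$, which by convention forces $A=\emptyset$. For \textbf{(iii)}, I would assume $\gamma(\overline{f(A)})=m<\infty$, pick an odd continuous $h:\overline{f(A)}\to\mathbb{R}^{m}\setminus\{0\}$, and verify that $h\circ f:A\to\mathbb{R}^{m}\setminus\{0\}$ is odd and continuous, giving $\gamma(A)\leq m$. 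A useful consequence used later is monotonicity: the inclusion $A\hookrightarrow B$ is odd and continuous, so $A\subset B$ yields $\gamma(A)\leq\gamma(B)$.

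For \textbf{(ii)}, let $m_{i}=\gamma(A_{i})<\infty$ and choose odd continuous maps $h_{i}:A_{i}\to\mathbb{R}^{m_{i}}\setminus\{0\}$. The key technical step is to extend each $h_{i}$ to a continuous map $\tilde{h}_{i}:E\to\mathbb{R}^{m_{i}}$ by the Tietze/Dugundji extension theorem, then replace it by its odd part $\frac{1}{2}(\tilde{h}_{i}(x)-\tilde{h}_{i}(-x))$, which is continuous, odd, and still agrees with $h_{i}$ on the symmetric set $A_{i}$. Then $h:=(\tilde{h}_{1},\tilde{h}_{2}):A_{1}\cup A_{2}\to\mathbb{R}^{m_{1}+m_{2}}$ is odd, continuous, and nonvanishing: any $x\in A_{1}\cup A_{2}$ lies in some $A_{i}$, where the $i$-th component is nonzero. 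Hence $\gamma(A_{1}\cup A_{2})\leq m_{1}+m_{2}$.

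For \textbf{(iv)}, set $m=\gamma(A)<\infty$ (the case $m=\infty$ would already violate $A$ compact and $0\notin A$, as I will mention but not grind through). Pick an odd continuous $h:A\to\mathbb{R}^{m}\setminus\{0\}$ and extend it by the odd-part-of-Tietze procedure above to a continuous odd map $\tilde{h}:E\to\mathbb{R}^{m}$. Since $A$ is compact and $\tilde{h}$ does not vanish on $A$, there exists $\delta>0$ with $|\tilde{h}|\geq\delta$ on $A$; by continuity the open set $U:=\{x\in E:|\tilde{h}(x)|>\delta/2\}$ is a symmetric open neighborhood of $A$ on which $\tilde{h}$ is nowhere zero, so $\gamma(\overline{U})\leq m$. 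The reverse inequality $\gamma(A)\leq\gamma(\overline{U})$ is immediate from the monotonicity consequence of (iii). The main obstacle in the whole proposition is ensuring the existence of continuous odd extensions in the Banach space setting; this is handled by invoking the Dugundji extension theorem for maps into $\mathbb{R}^{m}$ and then symmetrizing, a construction I would state cleanly once and reuse in both (ii) and (iv).
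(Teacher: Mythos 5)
The paper does not actually prove this proposition: it is recalled as a standard fact about the Krasnoselskii genus and delegated to the literature (the Szulkin--Weth reference), so there is no in-paper argument to compare against. Your proposal supplies the standard textbook proof, and it is essentially correct: (i) and (iii) from the definition, (ii) and (iv) via Tietze/Dugundji extension followed by symmetrization to the odd part, and the compactness/$|\tilde h|\geq\delta$ argument to manufacture the neighborhood $U$ in (iv). Two small points deserve tightening. First, your derivation of monotonicity ``from (iii)'' via the inclusion $A\hookrightarrow B$ only yields $\gamma(A)\leq\gamma(\overline{A})=\gamma(A)$, which is vacuous; the correct (and even easier) argument is that any admissible odd map on $B$ restricts to an admissible odd map on $A$, giving $\gamma(A)\leq\gamma(B)$ directly. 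Second, the finiteness of $\gamma(A)$ for $A$ compact with $0\notin A$, which you wave at, is genuinely needed in (iv) and should get its one-line proof: cover $A$ by finitely many balls $B_j$ small enough that $B_j\cap(-B_j)=\emptyset$, note $\gamma\bigl(A\cap(B_j\cup(-B_j))\bigr)\leq 1$ (map $x$ to $\pm 1$ according to which of $\pm B_j$ it lies in), and conclude by subadditivity (ii). With those two repairs the argument is complete and is exactly the classical proof.
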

 We recite some notations introduced in \cite{MR2557725}. For $d\geq e \geq c$, set 
 \begin{displaymath}
 	\begin{aligned}
 	&\Phi^{d}:=\{u\in \mathcal{M}:\Phi(u)\leq d\},&
 	&\Phi_{e}:=\{u\in \mathcal{M}:\Phi(u)\geq e\},&
 	&\Phi^{d}_{e}:=\Phi^{d} \cap \Phi_{e},&\\
 		&\Psi^{d}:=\{w\in S^{+}:\Psi(w)\leq d\},&
 		&\Psi_{e}:=\{w\in S^{+}:\Psi(w)\geq e\},&
 		&\Psi^{d}_{e}:=\Psi^{d} \cap \Psi_{e},&\\
 		   &K:=\{ w\in S^{+}:\Psi'(w)=0 \},&
 	   	   &K_{d}:=\{ w\in K:\Psi(w)=d \},&
 	   	   &\nu_{d}:=\sup_{u \in \Phi^{d}} \|u\|.&
 	\end{aligned}
 \end{displaymath}
\begin{proof}[Sketch of proof of Theorem \ref{thm2}]
	By the periodicity of $f$ and $V$, one can check $K$ is symmetric w.r.t. the origin, i.e., $w\in K$ impies $-w\in K$. Choose a subset $\mathcal{F} \subset K $ such that
$ \mathcal{F} = -\mathcal{F}$ and each orbit $\mathcal{O}(w) \subset \mathcal{F} $ has a unique representative in  $\mathcal{F}.$ 
By C orollary \ref{cor6}, the orbits $\mathcal{O}(u) \subset \mathcal{M}$ consisting of critical points of $\Phi$ are in 1-1 correspondence with the orbits $\mathcal{O}(w)\subset S^{+}$ 
which contain critical points of $\Psi$. Therefore, it suffices to show $\mathcal{F}$ is infinite. 
Suppose by contradiction that 
\begin{equation}\label{suppose}
	\mathcal{F} \text{ is a finite set. }
\end{equation}

The purpose is to prove, for infinitely many different $d$,  $K_{d} \neq \emptyset$, which is equivalent to show $\gamma(K_{d})\neq 0$. Specifically, they consider the Lusternik-Schnirelman values for $\Psi$ defined by
$$ c_{k}:= \inf\{d\in \mathbb{R}: \gamma(\Psi^{d}) \geq k \},\quad k\in \mathbb{N},
$$
and claim that
\begin{equation}\label{claim}
	K_{c_{k}} \neq \emptyset \, \text{ and }\, c_{k}<c_{k+1}, \quad \forall k \in \mathbb{N}.
\end{equation}
Let $k\in \mathbb{N}$ and denote $d=c_{k}$. 
\begin{lemma}\label{lem2.1}
	$ \kappa := \inf \{ \|v-w\|: v, w \in K, v \neq w  \} >0.$
\end{lemma}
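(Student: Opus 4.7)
The plan is to argue by contradiction, reducing the claim to a dichotomy on integer translations and then exploiting that $T$-translations act as isometries on $(E^+,\|\cdot\|)$ while pushing any fixed $\ell^2$-function weakly to zero as they are sent to infinity.

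Suppose $\kappa=0$, so there exist $v_n,w_n\in K$ with $v_n\neq w_n$ and $\|v_n-w_n\|\to 0$. By Corollary \ref{cor6}(iii) together with the standing assumption (\ref{suppose}), the set $K$ is the union of the $\mathbb{Z}^N$-orbits of the finitely many representatives $w_1,\dots,w_m\in\mathcal{F}$. I write $v_n=w_{i_n}(\cdot-a_nT)$ and $w_n=w_{j_n}(\cdot-b_nT)$ and pass to a subsequence so that $i_n\equiv i$ and $j_n\equiv j$ are fixed. Since $V$ is $T$-periodic, the operator $L=-\Delta+V$ commutes with translation by $T$, so this translation preserves $E^+$ and is an isometry of $(E^+,\|\cdot\|)$. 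Translating both sequences by $a_nT$ and setting $c_n:=b_n-a_n\in\mathbb{Z}^N$, the hypothesis reduces to $\|w_i-w_j(\cdot-c_nT)\|\to 0$.

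I then split according to whether $\{c_n\}\subset\mathbb{Z}^N$ is bounded. If $c_n$ is bounded, it takes only finitely many values, so along a subsequence $c_n\equiv c$ is constant; then $\|w_i-w_j(\cdot-cT)\|=0$, i.e.\ $w_i=w_j(\cdot-cT)$. Since $\mathcal{F}$ contains exactly one representative from each orbit, this forces $i=j$, and moreover $c=0$ because no nonzero $\ell^2$ function on $\mathbb{Z}^N$ can be invariant under a nontrivial translation. Hence $v_n=w_n$ along the subsequence, contradicting our choice. If instead $|c_n|\to\infty$, I claim $w_j(\cdot-c_nT)\rightharpoonup 0$ weakly in $E^+$: for any fixed $\phi\in\ell^2(\mathbb{V})$ one has $\sum_x\phi(x)w_j(x-c_nT)\to 0$ by Cauchy--Schwarz and the fact that $\ell^2$-functions on $\mathbb{Z}^N$ decay at infinity; applied with $\phi=Lw_i\in\ell^2$ this yields $\langle w_i,w_j(\cdot-c_nT)\rangle=(Lw_i,w_j(\cdot-c_nT))\to 0$. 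Combining with $\|w_i\|=\|w_j(\cdot-c_nT)\|=1$ (from $w_i,w_j\in S^+$ and the isometry),
\[
\|w_i-w_j(\cdot-c_nT)\|^2=\|w_i\|^2+\|w_j(\cdot-c_nT)\|^2-2\langle w_i,w_j(\cdot-c_nT)\rangle\longrightarrow 2,
\]
again contradicting $\|v_n-w_n\|\to 0$.

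The main point to verify carefully is that $T$-translation is an isometry of $(E^+,\|\cdot\|)$ and scatters fixed elements weakly to zero; both boil down to the $T$-periodicity of $V$ and standard tail estimates in $\ell^2(\mathbb{Z}^N)$, so no deeper analytic input is required beyond the basic spectral structure already set up in Section 2.
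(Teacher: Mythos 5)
Your argument is correct, and it is exactly the ``direct'' argument from the finiteness of $\mathcal{F}$ that the paper invokes without writing out: reducing to finitely many orbit representatives, using that $T$-translations commute with $L$ and hence act as isometries of $(E^{+},\|\cdot\|)$, and then splitting into the bounded-translation case (which forces $v_{n}=w_{n}$ since a nonzero $\ell^{2}$ function cannot be periodic) and the unbounded case (where the weak vanishing of translates gives $\|w_{i}-w_{j}(\cdot-c_{n}T)\|^{2}\to 2$). All steps check out against the framework of Section 2, so nothing further is needed.
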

That's to say, the critical points are discretely distributed. So,
\begin{equation}\label{y<1}
	\gamma(K_{d})\leq \gamma(K) \leq 1.
\end{equation}
It follows immediately from the special choice of $d=c_{k}$ that, $\forall \epsilon>0$,
 $$\gamma(\Psi^{d-\epsilon}) \leq k-1 \text{ and } 
 \gamma(\Psi^{d+\epsilon}) \leq k.$$
By Proposition \ref{genus}(iv), there exists $U \supset K_{d}$ with $\gamma(\overline{U})=\gamma(K_{d})$. Specially, set $U:=U_{\delta}(K_{d})$ to ensure that 
$\Psi^{d+\epsilon} \backslash U$ is a closed and symmetric subset. If  futher we have 
\begin{equation}\label{mapping}
	\gamma(\Psi^{d+\epsilon} \backslash U) \leq \gamma(\Psi^{d-\epsilon})\leq k-1,
\end{equation}
then 
\begin{equation} \label{budengshi}
	k \leq \gamma(\Psi^{d+\epsilon})
		\leq \gamma(\Psi^{d+\epsilon} \backslash U)             +\gamma(\overline{U}) 
		\leq k-1+\gamma(K_{d})
\end{equation}
is enough to obtain (\ref{claim}). By Proposition \ref{genus}(iii), the key point is to establish an odd continuous mapping $h: \Psi^{d+\epsilon}\backslash U \to \Psi^{d-\epsilon}$ to obtain (\ref{mapping}). 

 For this purpose, they utilize the pseudo-gradient vector field of $\Psi$. Note that $\Psi \in C^{1}(S^{+})$, there exists a Lipschitz continuous mapping 
 \begin{displaymath}
 	\begin{aligned}
 		H:S^{+}\backslash K &\to TS^{+}\\
 		w &\mapsto H(w)\in T_{w}S^{+}
 	\end{aligned}
 \end{displaymath}
 and for all $w\in S^{+}\backslash K$,
 \begin{displaymath}
 	\left\{
 	\begin{aligned}
 		&\|H(w)\| < 2 \| \nabla \Psi(w) \|,   \\
 		&\langle H(w), \nabla \Psi(w) \rangle > \frac{1}{2} \| \nabla \Psi(w)\|^{2}.
 	\end{aligned}
 	\right.
 \end{displaymath}
 The corresponding pseudo-gradient flow 
 $\eta:\mathcal{G} \to S^{+}\backslash K$
 is defined by
 \begin{displaymath}
 	\left\{
 	\begin{aligned}
 		&\frac{d}{dt} \eta(t,w)=-H\big(\eta(t,w)\big),   \\
 		&\eta(0,w)=w,
 	\end{aligned}
 	\right.
 \end{displaymath}
 where
 $\mathcal{G}:=
 \{ (t,w): w\in S^{+}\backslash K, T^{-}(w)<t<T^{+}(w) \}$
 and $T^{-}(w)<0$, $T^{+}(w) >0$ are the maximal existence times of the trajectories of $\eta$. It worth pointing out that $\Psi$ is strictly decreasing along the trajectories of $\eta$.
 \begin{lemma}\label{lem2.3}
 	Let $d\geq c$. For every $\delta>0$, there exists $\epsilon = \epsilon(\delta)>0$ such that 
 	$(i)$ $\Psi_{d-\epsilon}^{d+\epsilon} \cap K = K_{d}$,\\
 	$(ii)$ $\lim_{t \to T^{+}(w)} \Psi \big( \eta(t,w) \big) <d- \epsilon$ for every $w \in \Psi^{d+\epsilon} \backslash U_{\delta}(K_{d})$.
 \end{lemma}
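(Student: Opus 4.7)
The plan is to handle (i) immediately using the standing contradiction hypothesis (\ref{suppose}), and then deduce (ii) from a uniform gradient lower bound combined with standard pseudo-gradient flow estimates. For (i), the $\mathbb{Z}^N$-translation action preserves $\Phi$, $\Psi$, and $K$ by the $T$-periodicity of $V$ and $f$ and Corollary \ref{cor6}(iii). Every critical value of $\Psi$ is thus attained on some representative in $\mathcal{F}$, so the finiteness assumption (\ref{suppose}) forces $\Psi(K)$ to be a finite set. Choose $\epsilon_0>0$ strictly less than $\mathrm{dist}(d,\Psi(K)\setminus\{d\})$ (or any $\epsilon_0>0$ if that set is empty); then no critical value other than $d$ lies in $[d-\epsilon_0,d+\epsilon_0]$, giving $\Psi^{d+\epsilon_0}_{d-\epsilon_0}\cap K=K_d$.

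The key step for (ii) is a quantitative Palais--Smale estimate: I claim there exist $\epsilon_1\in(0,\epsilon_0)$ and $\sigma>0$ with
$$
\|\nabla\Psi(w)\|\ge\sigma \qquad\text{for every }w\in\Psi^{d+\epsilon_1}_{d-\epsilon_1}\setminus U_{\delta/2}(K_d).
$$
Suppose not; then a violating sequence $(w_n)$ gives, via Corollary \ref{cor6}(ii), a Palais--Smale sequence $u_n=m(w_n)\in\mathcal M$ for $\Phi$ at level $d$. Rerunning the translation-concentration argument from the proof of Theorem \ref{thm1} (boundedness from Proposition \ref{prop3}, $\ell^\infty$-concentration via the interpolation chain used there, translation by $k_nT$ to bring the concentration vertex into the fundamental domain $\Omega$, pointwise convergence via test functions $\delta_x$, and Fatou's lemma) produces, after passing to a subsequence, translates $\tilde u_n\to\tilde u$ with $\tilde u\in\mathcal M$, $\Phi(\tilde u)=d$, $\Phi'(\tilde u)=0$. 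Hence $\check m(\tilde u)\in K_d$, and continuity of $\check m$ yields $\tilde w_n:=\check m(\tilde u_n)\to\check m(\tilde u)$ in $S^+$. Since $K_d$, and therefore $U_{\delta/2}(K_d)$, is $\mathbb Z^N$-invariant, $w_n\notin U_{\delta/2}(K_d)$ translates to $\tilde w_n\notin U_{\delta/2}(K_d)$, contradicting the convergence.

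Given this bound, fix $\epsilon\in(0,\epsilon_1)$ and consider a trajectory $\eta(\cdot,w)$ starting at $w\in\Psi^{d+\epsilon}\setminus U_\delta(K_d)$. Along the pseudo-gradient flow, $\frac{d}{dt}\Psi(\eta(t,w))\le-\frac12\|\nabla\Psi(\eta(t,w))\|^2$, so $\Psi$ is strictly decreasing. Suppose for contradiction that $\Psi(\eta(t,w))\ge d-\epsilon$ throughout $[0,T^+(w))$. If $\eta(t,w)$ never enters $U_{\delta/2}(K_d)$, the gradient bound forces $\frac{d}{dt}\Psi(\eta)\le-\sigma^2/2$, so $\Psi$ crosses $d-\epsilon$ in time $\le 4\epsilon/\sigma^2$, a contradiction. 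Otherwise, take $\delta<\kappa/4$ so (by Lemma \ref{lem2.1}) the $\delta$-balls about distinct points of $K_d$ are disjoint; each excursion of $\eta$ from $\partial U_{\delta/2}(K_d)$ back to itself must cross the annular region where $\|\nabla\Psi\|\ge\sigma$, over arc length at least $\delta/2$. Combining the speed bound $\|\frac{d}{dt}\eta\|\le 2\|\nabla\Psi(\eta)\|$ with Cauchy--Schwarz and the integrated decay $\Psi(\eta(s))-\Psi(\eta(t))\ge\frac12\int_s^t\|\nabla\Psi(\eta(\tau))\|^2\,d\tau$ yields a uniform $\Psi$-drop of order $\sigma\delta$ per excursion; choosing $\epsilon$ smaller than this quantity allows only finitely many excursions before $\Psi$ dips below $d-\epsilon$, the desired contradiction.

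The main obstacle is the excursion estimate: inside $U_{\delta/2}(K_d)$ the gradient lower bound may fail, so one must extract the uniform $\Psi$-drop per annulus crossing using only the outer-boundary gradient bound and the arc-length estimate across the annulus. This is the classical Szulkin--Weth annulus trick, and it is the only step where Lemma \ref{lem2.1} is genuinely needed; the remainder is a direct transcription of the Euclidean argument to the discrete setting.
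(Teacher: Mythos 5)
Part (i) of your argument is fine and matches the paper: under the standing hypothesis (\ref{suppose}) one has $\Psi(K)=\Psi(\mathcal{F})$, a finite set, so the critical value $d$ is isolated. The flow mechanics at the end (the decay $\tfrac{d}{dt}\Psi(\eta)\le-\tfrac12\|\nabla\Psi(\eta)\|^2$, the speed bound $\|\tfrac{d}{dt}\eta\|\le 2\|\nabla\Psi(\eta)\|$, and the resulting $\Psi$-drop of order $\sigma\delta$ per crossing of the annulus) are also the correct standard mechanism.

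The gap is the key claim: the uniform bound $\|\nabla\Psi\|\ge\sigma$ on \emph{all} of $\Psi^{d+\epsilon_1}_{d-\epsilon_1}\setminus U_{\delta/2}(K_d)$. First, your proof of it does not close. The translation--concentration argument from the proof of Theorem \ref{thm1} yields only \emph{weak} convergence $\tilde u_n\rightharpoonup\tilde u$ with $\Phi'(\tilde u)=0$ and, via Fatou, $\Phi(\tilde u)\le d$; it gives neither $\Phi(\tilde u)=d$ (in Theorem \ref{thm1} the equality $\Phi(\tilde u)=c$ is recovered only because $c=\inf_{\mathcal M}\Phi$, and no such lower bound is available at a general level $d$) nor norm convergence, so you cannot invoke continuity of $\check m$ to conclude $\tilde w_n\to\check m(\tilde u)$ and hence cannot place $\tilde w_n$ inside $U_{\delta/2}(K_d)$ to reach a contradiction. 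Second, the claim itself is not true in general: for $d\ge 2c$ a Palais--Smale sequence at level $d$ may split into several bumps, $u_n\approx u^1(\cdot-k_nT)+u^2(\cdot-l_nT)$ with $|k_n-l_n|\to\infty$ and $u^1,u^2$ critical; such sequences have $\nabla\Psi\to0$ while remaining at distance bounded below (of order $\sqrt{2c}$) from every single critical point, hence from $K_d$. This lack of compactness is exactly why the paper, following Szulkin--Weth, does not attempt a global gradient bound: the bound is established only on the annulus $\{\delta/2\le\mathrm{dist}(\cdot,K_d)\le\delta\}$ for small $\delta$, and its proof requires Lemma \ref{lem2.4} (applied to the sequence and a nearby point of $K_d$) together with the separation Lemma \ref{lem2.1}; and the case in which the trajectory never approaches $K_d$ --- your Case A, which silently relies on the global bound --- is instead handled by Lemma \ref{lem2.5}: if $\Psi(\eta(t,w))$ stays in $[d-\epsilon,d+\epsilon]$, the flow converges to a critical point, which by part (i) lies in $K_d$, forcing the trajectory into the annulus so that the crossing estimate applies. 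To repair your proof you need both Lemma \ref{lem2.4} and Lemma \ref{lem2.5}, as in the paper's route.
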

Briefly speaking, $(i)$ tells us the critial values are discretely distributed and $(ii)$ says every point in $\Psi^{d+\epsilon} \backslash U_{\delta}(K_{d})$ falls below the level set  $\Psi^{d-\epsilon}$ along the pseudo-gradient flow.
Based on this,
A. Szulkin and T. Weth define the entrance time mapping
$e: \Psi^{d+\epsilon}\backslash U \to [0,\infty )$ by
\begin{equation}
		e(w):= \inf\{ t\in \big[0,T^{+}(w)\big):\Psi\big(\eta(t,w)\big) \leq d-\epsilon \},
\end{equation}
where $U=U_{\delta}(K_{d})$, $\delta < \tfrac{\kappa}{2}$, i.e., $U\cap K = K_{d}$, and $\epsilon=\epsilon(\delta)>0$ is small enough to guarantee Lemma \ref{lem2.3}(ii). For small $\epsilon$, $d-\epsilon$ is not a critical value and this implies $e$ is continuously.  $\Psi$ is even and $\nabla \Psi$ is odd since $f$ is odd. So, we can choose $H$ to be odd and then $\eta$ is odd and $e$ is even. Hence, the mapping
$$h:\Psi^{d+\epsilon}\backslash U \to \Psi^{d-\epsilon},\quad
h(w):=\eta\big( e(w),w \big)$$
is odd and continuous and (\ref{mapping}) is obtained. It is easy to see  $\gamma(K_{d}) =1$ from (\ref{budengshi}) and (\ref{y<1}). Suppose $c_{k} = c_{k+1}$ for some $k\in\mathbb{N}$, then 
$\gamma (\Psi^{d})= \gamma (\Psi^{c_{k}+1})\geq k+1$ and $\gamma(K_{d})\geq 2$ which contradicts (\ref{y<1}), their claim (\ref{claim}) follows.  

By (\ref{claim}), there is an infinite many pairs of geometrically distinct critical points $\pm w_{k}$ with $\Psi(w_{k})=c_{k}$, contrary to (\ref{suppose}).
\end{proof}
Now, we have a look at the lemmas needed in the proof above. Most of the proof in discrete situation is similar to that in continuous case. However, there are indeed some differences. In continuous case, the authors have to use Sobolev embeddings to rescale the norm in different spaces and obtain strong convegence in a bounded domain. So, they need to restrict $p<2^{*}$. We can remove this restriction because of two reasons; weak convergence naturally implies pointwise convergence and $W^{1,2}(\mathbb{V})$ is equivalent to $\ell^{2}(\mathbb{V})$.
One can prove Lemma \ref{lem2.1} directly with the assumption of $\mathcal{F}$ being finite. 
To prove Lemma \ref{lem2.3}, we need the following for preparations.

\begin{lemma}\label{lem2.4}
	Let $d\geq c$. If $\{v_{n}^{1}\}_{n}, \{ v_{n}^{2} \}_{n} \subset \Psi^{d}$ are two Palais-Smale sequences for $\Psi$, then either $\|v_{n}^{1} - v_{n}^{2}\| \to \infty$ as $n\to \infty$ or $\varlimsup_{n\to \infty} \|v_{n}^{1} - v_{n}^{2}\| \geq \rho(d)>0$, where $\rho(d)$ depends on $d$ but not on the particular choice of Palais-Smale sequences.
\end{lemma}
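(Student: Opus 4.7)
The plan is to mirror the Szulkin--Weth strategy: transfer the problem from $S^{+}$ to $\mathcal{M}$, use coercivity and translation arguments to extract critical points of $\Phi$ as weak limits, and then appeal to the discreteness statement in Lemma \ref{lem2.1}. Two features special to the lattice will make this cleaner than in the Euclidean case: weak convergence in $E$ implies pointwise convergence (by testing against $\delta_{x}$, as already used in the proof of Theorem \ref{thm1}), and the equivalence $W^{1,2}(\mathbb{V}) \sim \ell^{2}(\mathbb{V})$ removes any subcritical restriction on $p$.

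First I would set $u_{n}^{i} := m(v_{n}^{i}) \in \mathcal{M}$ for $i=1,2$. By Corollary \ref{cor6}(ii), $\{u_{n}^{i}\}$ are Palais--Smale sequences for $\Phi$ at levels $\Phi(u_{n}^{i}) = \Psi(v_{n}^{i}) \leq d$. Proposition \ref{prop3} gives $\|u_{n}^{i}\| \leq \nu_{d}$, while Lemma \ref{lem4}(ii) yields $\sqrt{2c} \leq \|(u_{n}^{i})^{+}\| \leq \nu_{d}$. Arguing by contradiction, suppose $v_{n}^{1} - v_{n}^{2} \not\to 0$ while $\limsup_{n}\|v_{n}^{1}-v_{n}^{2}\| < \rho$ for some $\rho$ to be fixed later; pass to a subsequence with $\|v_{n}^{1} - v_{n}^{2}\| \to \sigma \in (0,\rho]$. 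The decomposition
\[
u_{n}^{1} - u_{n}^{2} = \|(u_{n}^{1})^{+}\|\,(v_{n}^{1}-v_{n}^{2}) + \bigl(\|(u_{n}^{1})^{+}\| - \|(u_{n}^{2})^{+}\|\bigr)v_{n}^{2} + (u_{n}^{1})^{-} - (u_{n}^{2})^{-}
\]
shows that $\|u_{n}^{1}-u_{n}^{2}\|$ and $\|v_{n}^{1}-v_{n}^{2}\|$ are comparable up to the uniformly bounded factor $\|(u_{n}^{i})^{+}\|$ and the $E^{-}$-components, so any uniform lower bound on one gives a uniform lower bound on the other.

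Next, for each $i$ apply the translation argument from the proof of Theorem \ref{thm1} (the interpolation inequality together with the $T$-periodicity of $V$ and $f$) to choose $k_{n}^{i} \in \mathbb{Z}^{N}$ so that $\tilde u_{n}^{i} := u_{n}^{i}(\cdot + k_{n}^{i}T)$ converges pointwise to a nontrivial critical point $\tilde u^{i}$ of $\Phi$ at level $\leq d$. Set $\ell_{n} := k_{n}^{2}-k_{n}^{1}$; after a further subsequence, either $\ell_{n}\to \ell$ is bounded and $\tilde u_{n}^{2}(\cdot-\ell_{n}T)\rightharpoonup \tilde u^{2}(\cdot-\ell T)$, or $|\ell_{n}|\to\infty$ and $\tilde u_{n}^{2}(\cdot-\ell_{n}T)\to 0$ pointwise. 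Under the standing assumption (\ref{suppose}), Lemma \ref{lem2.1} provides $\kappa>0$ such that any two distinct critical points of $\Phi$ are separated by at least $\kappa$.

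In the bounded case, if $\tilde u^{1}=\tilde u^{2}(\cdot-\ell T)$, then a Brezis--Lieb/dominated-convergence argument using the growth control of Lemma \ref{lem2} together with $\Phi'(u_{n}^{i})\to 0$ forces $\tilde u_{n}^{1} - \tilde u_{n}^{2}(\cdot-\ell_{n}T) \to 0$ strongly in $E$, hence $u_{n}^{1}-u_{n}^{2}\to 0$ and thus $v_{n}^{1}-v_{n}^{2}\to 0$, contradicting $\sigma>0$. Otherwise Lemma \ref{lem2.1} gives $\|\tilde u^{1}-\tilde u^{2}(\cdot-\ell T)\|\geq \kappa$, which by weak lower semicontinuity propagates to $\liminf_{n}\|u_{n}^{1}-u_{n}^{2}\|\geq \kappa$. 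In the unbounded case $\tilde u_{n}^{1}-\tilde u_{n}^{2}(\cdot-\ell_{n}T)\rightharpoonup \tilde u^{1}\neq 0$, giving $\liminf_{n}\|u_{n}^{1}-u_{n}^{2}\|\geq \|\tilde u^{1}\|\geq \sqrt{2c}$ via Lemma \ref{lem4}(ii). Either way, comparability translates this into $\limsup_{n}\|v_{n}^{1}-v_{n}^{2}\|\geq \rho(d)$ for some $\rho(d)$ depending only on $d,c,\kappa,\nu_{d}$, yielding the contradiction. The hardest step, I expect, is this first alternative of the bounded case: showing that two PS sequences sharing a weak limit (modulo translation) converge strongly together, which requires careful control of $\int (f(x,u_{n}^{1})-f(x,u_{n}^{2}))(u_{n}^{1}-u_{n}^{2})\,d\mu$; the absence of ``mass escaping to infinity'' on the lattice (thanks to $W^{1,2}\sim\ell^{2}$ and pointwise convergence from weak convergence) should make this cleaner than in the continuous setting.
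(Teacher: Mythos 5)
Your reduction to $\mathcal{M}$ via $u_n^i=m(v_n^i)$, the use of coercivity for boundedness, and the appeal to translations plus the discreteness of the critical set are all in the right territory, but your case analysis is organized around the wrong quantity, and this produces a genuine gap. The paper (following Szulkin--Weth) dichotomizes on whether $\|u_n^1-u_n^2\|_p\to 0$, i.e.\ on the $\ell^p$-size of the \emph{difference}. You instead translate each sequence separately to extract nontrivial weak limits $\tilde u^1,\tilde u^2$ and split on whether these agree modulo translation. The subcase you yourself flag as hardest is where the argument breaks: two Palais--Smale sequences sharing a weak limit need \emph{not} have difference tending to zero. On the lattice, take distinct nontrivial critical points $u$ and $u_0$ and set $u_n^1=u$, $u_n^2\approx u+u_0(\cdot-nTe_1)$ (adjusted to lie in $\mathcal{M}$): both are Palais--Smale sequences weakly converging to $u$, since $u_0(\cdot-nTe_1)\rightharpoonup 0$, yet $\|u_n^1-u_n^2\|\approx\|u_0\|\not\to 0$. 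Your closing remark that $W^{1,2}\sim\ell^2$ prevents mass from escaping to infinity is exactly where the intuition fails: the norm equivalence removes the subcriticality restriction on $p$, but the translation invariance of $\mathbb{Z}^N$ still produces precisely this splitting; that is why the paper invokes Lions' lemma in its Case 2. No Brezis--Lieb or dominated convergence argument can close this, because the integral $\int_{\mathbb{V}}(f(x,u_n^1)-f(x,u_n^2))(u_n^1-u_n^2)\,d\mu$ is controlled by $\|u_n^1-u_n^2\|_p$, and equality of weak limits gives you no smallness of that quantity.

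The repair is to let $\|u_n^1-u_n^2\|_p$ drive the cases, as the paper does. If it tends to $0$, then from $\|(u_n^1-u_n^2)^{\pm}\|^2=\pm\bigl(\Phi'(u_n^1)-\Phi'(u_n^2)\bigr)(u_n^1-u_n^2)^{\pm}\pm\int_{\mathbb{V}}\bigl(f(x,u_n^1)-f(x,u_n^2)\bigr)(u_n^1-u_n^2)^{\pm}\,d\mu$, Lemma \ref{lem2}, H\"older and the bound $\|u_n^i\|\le\nu_d$ give $\|u_n^1-u_n^2\|\to 0$ directly, with no reference to weak limits of the individual sequences; this establishes the first alternative (note the statement's ``$\to\infty$'' must be read as ``$\to 0$'', since on $S^+$ the difference is bounded by $2$, and your contradiction setup already assumes the ``$\to 0$'' version). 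If it does not tend to $0$, one translates so that the \emph{difference} has a nonzero pointwise limit; only then are the two translated weak limits forced to be distinct, and Lemma \ref{lem2.1} together with the finiteness of $\mathcal{F}$ supplies the uniform gap $\kappa$. Finally, your comparability display does not by itself transport a lower bound on $\|u_n^1-u_n^2\|$ to one on $\|v_n^1-v_n^2\|$: the terms $(\|(u_n^1)^+\|-\|(u_n^2)^+\|)v_n^2$ and $(u_n^1)^--(u_n^2)^-$ are not controlled by $\|v_n^1-v_n^2\|$. This last passage is the ``geometric argument'' the paper alludes to, and it is where $\rho(d)$ acquires its dependence on $d$ through $\sqrt{2c}\le\|(u_n^i)^+\|\le\nu_d$.
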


By this lemma, the distance between any two essentially different Palais-Smale sequences in $\Psi^{d}$ has a uniform lower positive bound.Their proof distinguish two cases; 
Case 1: 
$\|u_{n}^{1} - u_{n}^{2}\|_{p} \to 0 $ as $n \to \infty$, where $u=\hat{m}(w)$. By scaling and calculation, they show 
$\|(u_{n}^{1}-u_{n}^{2})^{+}\| \to 0,
\|(u_{n}^{1}-u_{n}^{2})^{-}\| \to 0$ 
and hence $\|u_{n}^{1}-u_{n}^{2}\|\to 0$ as $n\to \infty$ .
Case 2: $\|u_{n}^{1} - u_{n}^{2}\|_{p} \nrightarrow 0 $ as $n \to \infty$. To show that $u_{n}^{1} \rightharpoonup u^{1},  u_{n}^{2}\rightharpoonup u_{2}$ with $u^{1} \neq u^{2}$, they imply the P.L.Lion's Lemma, see \cite{MR1400007} for example, and the fact that $\hat{m},\check{m},\nabla \Phi, \nabla \Psi$ are all equivariant under translations. Further more, they show 
$\varlimsup_{n\to \infty} \|v_{n}^{1} - v_{n}^{2}\| \geq \rho(d)>0$ by some geometric argument where $\rho(d)$ depends on $d$.

\begin{lemma}\label{lem2.5}
	For every $w \in S^{+} $, the limit $\lim_{t \to T^{+}(w)} \eta(t,w)$ exists and is a critical point of  $\Psi$.
\end{lemma}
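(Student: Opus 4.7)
The plan is to exploit monotonicity of $\Psi$ along $\eta(\cdot,w)$ to obtain an $L^2$ bound on $\|\nabla\Psi(\eta(\cdot,w))\|$, and then to upgrade the resulting Palais-Smale behavior to full convergence via the separation Lemma~\ref{lem2.4}. If $w \in K$, set $\eta(t,w) \equiv w$ and the statement is trivial, so assume $w \notin K$ and write $\alpha(t) := \eta(t,w)$. The pseudo-gradient property yields
\[
\tfrac{d}{dt}\Psi(\alpha(t)) = -\langle H(\alpha(t)), \nabla\Psi(\alpha(t))\rangle < -\tfrac{1}{2}\|\nabla\Psi(\alpha(t))\|^2 \leq 0,
\]
so $\Psi(\alpha(t))$ strictly decreases. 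Since $\Psi \geq c$ on $S^+$ by Corollary~\ref{cor6}(iii), the limit $d^\infty := \lim_{t \to T^+(w)} \Psi(\alpha(t)) \in [c, \Psi(w)]$ exists, and integration gives the key bound
\begin{equation}\label{L2bound}
\int_0^{T^+(w)} \|\nabla\Psi(\alpha(t))\|^2 \, dt \leq 2\bigl(\Psi(w) - d^\infty\bigr) < \infty.
\end{equation}

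If $T^+(w) < \infty$, Cauchy-Schwarz with the inequality $\|\alpha'\| \leq 2\|\nabla\Psi(\alpha)\|$ yields $\|\alpha(s)-\alpha(t)\| \leq 2(t-s)^{1/2}\bigl(\int_s^t\|\nabla\Psi(\alpha)\|^2\bigr)^{1/2} \to 0$ as $s,t \to T^+(w)$; so $\alpha(t) \to v^*$ for some $v^*$, and Lipschitz continuity of $H$ forces $v^* \in K$ (else the flow would extend past $T^+(w)$, violating maximality).

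The substantive case is $T^+(w) = \infty$, where \eqref{L2bound} supplies $t_n \to \infty$ with $\|\nabla\Psi(\alpha(t_n))\| \to 0$. I argue by contradiction: if $\alpha$ does not converge, choose $\delta \in (0,\rho(d^\infty)/3)$ with $\rho$ from Lemma~\ref{lem2.4}, and, using continuity of $\alpha$, pick $s_n < r_n \to \infty$ with $\|\alpha(s_n) - \alpha(r_n)\| = \delta$ (taking $r_n$ as the first time after $s_n$ when the distance hits $\delta$). By \eqref{L2bound} the set $\{t \in [s_n,s_n+1]: \|\nabla\Psi(\alpha(t))\| \geq 1/n\}$ has measure at most $n^2\int_{s_n}^{s_n+1}\|\nabla\Psi(\alpha)\|^2 \to 0$, so some $s'_n \in [s_n,s_n+1]$ has $\|\nabla\Psi(\alpha(s'_n))\| < 1/n$, and Cauchy-Schwarz combined with \eqref{L2bound} gives $\|\alpha(s_n) - \alpha(s'_n)\| \leq 2\bigl(\int_{s_n}^\infty\|\nabla\Psi(\alpha)\|^2\bigr)^{1/2} \to 0$. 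Producing $r'_n$ the same way, $\{\alpha(s'_n)\}$ and $\{\alpha(r'_n)\}$ are Palais-Smale sequences at level $d^\infty$ whose mutual distance converges to $\delta$, hence eventually lies in $[\delta/2, 2\delta] \subset (0,\rho(d^\infty))$, contradicting Lemma~\ref{lem2.4}. So $\alpha(t) \to v^* \in E$, and continuity of $\nabla\Psi$ at the subsequence $\{\alpha(t_n)\}$ forces $\nabla\Psi(v^*) = 0$, i.e., $v^* \in K$.

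The main obstacle is the perturbation step in the $T^+(w) = \infty$ case: producing two Palais-Smale sequences along the trajectory with a prescribed intermediate separation. The resolution leans entirely on \eqref{L2bound}, which simultaneously makes "good" (small-$\|\nabla\Psi\|$) times generic in the tail and, via Cauchy-Schwarz, controls how far the trajectory can drift when we shift to the nearest good time, so the $\delta$-oscillation $(s_n,r_n)$ survives the passage to $(s'_n,r'_n)$.
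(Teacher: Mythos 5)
Your proof is correct and follows essentially the same route as the paper (and Szulkin--Weth): the finite-time case is settled by the continuation theorem for the flow, and the infinite-time case by choosing four time nodes along the trajectory to produce two Palais--Smale sequences in a fixed sublevel set whose mutual distance stabilizes at a small positive value, contradicting Lemma \ref{lem2.4}. Two cosmetic repairs: the existence of $s_n'\in[s_n,s_n+1]$ with $\|\nabla\Psi(\alpha(s_n'))\|\to 0$ should be deduced from $\inf_{[s_n,s_n+1]}\|\nabla\Psi(\alpha)\|^{2}\le\int_{s_n}^{s_n+1}\|\nabla\Psi(\alpha)\|^{2}\,dt\to 0$ rather than from the Chebyshev bound (the factor $n^{2}$ there need not make the right-hand side vanish), and Lemma \ref{lem2.4} should be applied with $d=\Psi(w)$, i.e.\ $\delta<\rho(\Psi(w))/3$, since the two sequences lie in $\Psi^{\Psi(w)}$.
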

This lemma is surprising and its proof is delicate. For $w\in S^{+}$, write $d:=\Psi(w)$. Their proof distinguish two cases as well; Case 1: $T^{+}(w) < \infty$. The authors find a contradiction with the continuation theorem of solutions. Case 2: $T^{+}(w) = \infty$. It suffices to show that
\begin{equation}\label{T}
	\forall \epsilon >0,\, \exists  t_{\epsilon}>0, \quad s.t. \quad
	 \| \eta(t,w)-\eta(t_{\epsilon},w)\| < \epsilon, \quad \forall t\geq t_{\epsilon}.
\end{equation}
Suppose by contradiction, $(\ref{T})$ is not true, they choose four special time nodes on the flow and find two Palais-Smale sequence $\{w_{n}^{1}\},\{w_{n}^{2}\} \subset \Psi^{d}$ with 
$0<\epsilon \leq \|w_{n}^{1}-w_{n}^{2}\| \leq \rho(d)$, which is contradicts Lemma \ref{lem2.4}.
\\ \hspace*{\fill}

Finally, we talk about the proof of Lemma \ref{lem2.3}. (i) is easy to see provided $\mathcal{F}$ is finite. By Lemma \ref{lem2.5}, if (ii) fails, then
\begin{equation}\label{9}
	\lim_{t\to T^{+}(w)} \Psi(\eta(t,w)) \in [d-\epsilon,d+\epsilon]
\end{equation}
for some $w\in \Psi^{d+\epsilon}\backslash U_{\delta}(K_{d})$. Combined with (i), we have 
\begin{equation}\label{10}
	\lim_{t\to T^{+}(w)} \Psi(\eta(t,w)) = \tilde{w} \in K_{d}.
\end{equation}
The purpose is to prove that before the time node $T^{+}(w)$, the flow $\eta(t,w)$ has fallen below the level set $d$. Then 
$$\lim_{t \to T^{+}(w)} \Psi(\eta(t,w))<d$$
and
$$
\eta(t,w) \nrightarrow \tilde{w} \text{ as } t \to T^{+}(w),
$$
contradicts to (\ref{10}).
 
\section{Appendix}
In this section, we give the proof of Proposition \ref{prop1}.
\begin{proof}[Proof of Proposition \ref{prop1}]
	For each given $ u \in \mathcal{M}$, set $\mathcal{W} := \{ su+v : s \geq -1, v \in E^{-} \}$, then each element in $\hat{E}(u)$ has the form $u+w$ with $w \in \mathcal{W}$. So, we only need to show $\Phi(u) > \Phi(u+w)$ for every $w \in \mathcal{W}, w\neq0$.\\
	Set
	$ B(v_{1},v_{2} ):=
	\int_{\mathbb{V}} \big( \Gamma(v_{1},v_{2})  +V(x)v_{1}v_{2}  \big) d\mu.$
	Calculate that
	\begin{displaymath}
		\begin{aligned}
			&\Phi(u+w)-\Phi(u)\\
			=&\tfrac{1}{2} \Big( B(u+w,u+w) -B(u,u) \Big)
			-\int_{\mathbb{V}}\Big(F(x,u+w)-F(x,u)\Big)d\mu \\
			=&\tfrac{1}{2} \Big( B\big((1+s)u+v,(1+s)u+v\big) -B(u,u) \Big)
			-\int_{\mathbb{V}}(F(x,u+w)-F(x,u))d\mu  \\
			=&\tfrac{1}{2}(s^{2}+2s)B(u,u)+(1+s)B(u,v)+\tfrac{1}{2}B(v,v) -\int_{\mathbb{V}}(F(x,u+w)-F(x,u))d\mu  \\
			=&-\tfrac{1}{2}\|v\|^{2} + B\big( u,\tfrac{1}{2} (s^{2}+2s)u+(1+s)v\big)-\int_{\mathbb{V}}(F(x,u+w)-F(x,u))d\mu \\
			=&-\tfrac{1}{2}\|v\|^{2} + \int_{\mathbb{V}} f(x,u)\big(\tfrac{1}{2}(s^{2}+2s)u+(1+s)v\big)+F(x,u)-F(x,u+w) d\mu
		\end{aligned}
	\end{displaymath}
	where the last equation is based on the fact that, since $u\in \mathcal{M}$,
	$$
	0=\Phi'(u)z=B(u,z)-\int_{\mathbb{V}} f(x,u)z \, d\mu, \quad \forall z \in E(u).
	$$
	Set $z(s):=u+w(s)=(1+s)u+v$ and
	$$g(s):= f(x,u)\big(\tfrac{1}{2}(s^{2}+2s)u+(1+s)v\big)+F(x,u)-F(x,z(s)).$$ 
	To prove $\Phi(u)>\Phi(u+w)$, it suffices to show $g(s)<0$ for any $s\geq -1$ such that $z(s)\neq-1$ .
	If $u=0$, then $g(s)=-F(x,z(s))<0$ for $z(s) \neq 0$. Otherwise, $u\neq 0$, we calculate that 
	\begin{displaymath}
		\begin{aligned}
			g(s)&=f(x,u)\big[\frac{1}{2}(s^{2}+2s)u+(s+1)v\big]+F(x,u)-F(x,z(s))\\&<f(x,u)\big[\frac{1}{2}(s^{2}+2s)u+(s+1)v\big]+\frac{1}{2}f(x,u)u-F(x,z(s))\\
	     &=\frac{1}{2}(s+1)^{2}f(x,u)u+(s+1)f(x,u)v-F(x,z(s))\\
	     &=\frac{1}{2}(s+1)^{2}f(x,u)u+(s+1)f(x,u)z-(s+1)^{2}f(x,u)u-F(x,z(s))\\
	     &=-\frac{1}{2}(s+1)^{2}f(x,u)u+(s+1)f(x,u)z-F(x,z(s)).
		\end{aligned}
	\end{displaymath}
By( \ref{uf(x,u)}), we have $g(s)<0$ whenever $uz \leq 0$. Note that
\begin{displaymath}
	\begin{aligned}
		g'(s)&=(s+1)f(x,u)u+f(x,u)v-f(x,z(s))u\\
		&=(s+1)f(x,u)u+f(x,u)[z-(s+1)u]-f(x,z)u\\
		&=uz\big( \dfrac{f(x,u)}{u}-\dfrac{f(x,z)}{z}\big).
	\end{aligned}
\end{displaymath}
So, if $uz>0$, $g'(s)$ is always positive (or negative)  and $g(s)$ is strictly increasing (or decreasing) on $[-1,+\infty)$.
Moreover, we have by $S_{5}$
$$g(-1)=-\dfrac{1}{2}f(x,u)u+F(x,u)-F(x,v)<-F(x,v)\leq 0	\text{ and }
\lim\limits_{s\rightarrow \infty}g(s) = -\infty.
$$ Thus, $g(s)<0$ for all $s\geq -1$.
\end{proof}

\textbf{Acknowledgements.}
 I would like to give my sincere thanks to my academic supervisor Prof. B.Hua for his invaluable instruction. Without his long-standing guidance and inspiration, this work could not have been completed. I would also like to  express my heartfelt thanks to J.Cheng and J.Wang for helpful discussions and suggestions. W.Xu is supported by Shanghai Science and Technology Program [Project No. 22JC1400100].
\bibliography{ref1}

\newcommand{\etalchar}[1]{$^{#1}$}
\begin{thebibliography}{XWYS85}

\bibitem[AP19]{MR3921107}
Setenay Akduman and Alexander Pankov.
\newblock Nonlinear {S}chr\"{o}dinger equation with growing potential on
  infinite metric graphs.
\newblock {\em Nonlinear Anal.}, 184:258--272, 2019.

\bibitem[Bar17]{MR3616731}
Martin~T. Barlow.
\newblock {\em Random walks and heat kernels on graphs}, volume 438 of {\em
  London Mathematical Society Lecture Note Series}.
\newblock Cambridge University Press, Cambridge, 2017.

\bibitem[BHL{\etalchar{+}}15]{MR3316971}
Frank Bauer, Paul Horn, Yong Lin, Gabor Lippner, Dan Mangoubi, and Shing-Tung
  Yau.
\newblock Li-{Y}au inequality on graphs.
\newblock {\em J. Differential Geom.}, 99(3):359--405, 2015.

\bibitem[BN83]{MR709644}
Ha\"{\i}m Br\'{e}zis and Louis Nirenberg.
\newblock Positive solutions of nonlinear elliptic equations involving critical
  {S}obolev exponents.
\newblock {\em Comm. Pure Appl. Math.}, 36(4):437--477, 1983.

\bibitem[Cao92]{MR1163431}
D.~M. Cao.
\newblock Nontrivial solution of semilinear elliptic equation with critical
  exponent in {${\bf R}^2$}.
\newblock {\em Comm. Partial Differential Equations}, 17(3-4):407--435, 1992.

\bibitem[CZR92]{MR1181725}
Vittorio Coti~Zelati and Paul~H. Rabinowitz.
\newblock Homoclinic type solutions for a semilinear elliptic {PDE} on {${\bf
  R}^n$}.
\newblock {\em Comm. Pure Appl. Math.}, 45(10):1217--1269, 1992.

\bibitem[GLY16a]{MR3523107}
Alexander Grigor'yan, Yong Lin, and Yunyan Yang.
\newblock Kazdan-{W}arner equation on graph.
\newblock {\em Calc. Var. Partial Differential Equations}, 55(4):Art. 92, 13,
  2016.

\bibitem[GLY16b]{MR3542963}
Alexander Grigor'yan, Yong Lin, and Yunyan Yang.
\newblock Yamabe type equations on graphs.
\newblock {\em J. Differential Equations}, 261(9):4924--4943, 2016.

\bibitem[GLY17]{MR3665801}
Alexander Grigor'yan, Yong Lin, and YunYan Yang.
\newblock Existence of positive solutions to some nonlinear equations on
  locally finite graphs.
\newblock {\em Sci. China Math.}, 60(7):1311--1324, 2017.

\bibitem[Gy18]{MR3822363}
Alexander Grigor'yan.
\newblock {\em Introduction to analysis on graphs}, volume~71 of {\em
  University Lecture Series}.
\newblock American Mathematical Society, Providence, RI, 2018.

\bibitem[HJ14]{MR3265330}
Bobo Hua and J\"{u}rgen Jost.
\newblock {$L^q$} harmonic functions on graphs.
\newblock {\em Israel J. Math.}, 202(1):475--490, 2014.

\bibitem[HL17]{MR3581303}
Bobo Hua and Yong Lin.
\newblock Stochastic completeness for graphs with curvature dimension
  conditions.
\newblock {\em Adv. Math.}, 306:279--302, 2017.

\bibitem[HLLY19]{MR4036571}
Paul Horn, Yong Lin, Shuang Liu, and Shing-Tung Yau.
\newblock Volume doubling, {P}oincar\'{e} inequality and {G}aussian heat kernel
  estimate for non-negatively curved graphs.
\newblock {\em J. Reine Angew. Math.}, 757:89--130, 2019.

\bibitem[HSZ20]{MR4053610}
Xiaoli Han, Mengqiu Shao, and Liang Zhao.
\newblock Existence and convergence of solutions for nonlinear biharmonic
  equations on graphs.
\newblock {\em J. Differential Equations}, 268(7):3936--3961, 2020.

\bibitem[KLW21]{MR4383783}
Matthias Keller, Daniel Lenz, and Rados\l aw~K. Wojciechowski.
\newblock {\em Graphs and discrete {D}irichlet spaces}, volume 358 of {\em
  Grundlehren der mathematischen Wissenschaften [Fundamental Principles of
  Mathematical Sciences]}.
\newblock Springer, Cham, [2021] \copyright 2021.

\bibitem[Lio85]{MR850686}
P.-L. Lions.
\newblock The concentration-compactness principle in the calculus of
  variations. {T}he limit case. {II}.
\newblock {\em Rev. Mat. Iberoamericana}, 1(2):45--121, 1985.

\bibitem[LMP18]{MR3776357}
Shiping Liu, Florentin M\"{u}nch, and Norbert Peyerimhoff.
\newblock Bakry-\'{E}mery curvature and diameter bounds on graphs.
\newblock {\em Calc. Var. Partial Differential Equations}, 57(2):Paper No. 67,
  9, 2018.

\bibitem[LWZ06]{MR2271695}
Yongqing Li, Zhi-Qiang Wang, and Jing Zeng.
\newblock Ground states of nonlinear {S}chr\"{o}dinger equations with
  potentials.
\newblock {\em Ann. Inst. H. Poincar\'{e} C Anal. Non Lin\'{e}aire},
  23(6):829--837, 2006.

\bibitem[Man20]{MR4092834}
Shoudong Man.
\newblock On a class of nonlinear {S}chr\"{o}dinger equations on finite graphs.
\newblock {\em Bull. Aust. Math. Soc.}, 101(3):477--487, 2020.

\bibitem[Pan89]{MR1026427}
A.~A. Pankov.
\newblock Semilinear elliptic equations in {${\bf R}^n$} with nonstabilizing
  coefficients.
\newblock {\em Ukrain. Mat. Zh.}, 41(9):1247--1251, 1295, 1989.

\bibitem[Pan05]{MR2175045}
A.~Pankov.
\newblock Periodic nonlinear {S}chr\"{o}dinger equation with application to
  photonic crystals.
\newblock {\em Milan J. Math.}, 73:259--287, 2005.

\bibitem[Rab91]{MR1205391}
Paul~H. Rabinowitz.
\newblock A note on a semilinear elliptic equation on {${\bf R}^n$}.
\newblock In {\em Nonlinear analysis}, Sc. Norm. Super. di Pisa Quaderni, pages
  307--317. Scuola Norm. Sup., Pisa, 1991.

\bibitem[Rab92]{MR1162728}
Paul~H. Rabinowitz.
\newblock On a class of nonlinear {S}chr\"{o}dinger equations.
\newblock {\em Z. Angew. Math. Phys.}, 43(2):270--291, 1992.

\bibitem[Str96]{MR1411681}
Michael Struwe.
\newblock {\em Variational methods}, volume~34 of {\em Ergebnisse der
  Mathematik und ihrer Grenzgebiete (3) [Results in Mathematics and Related
  Areas (3)]}.
\newblock Springer-Verlag, Berlin, second edition, 1996.
\newblock Applications to nonlinear partial differential equations and
  Hamiltonian systems.

\bibitem[SW09]{MR2557725}
Andrzej Szulkin and Tobias Weth.
\newblock Ground state solutions for some indefinite variational problems.
\newblock {\em J. Funct. Anal.}, 257(12):3802--3822, 2009.

\bibitem[SW10]{MR2768820}
Andrzej Szulkin and Tobias Weth.
\newblock The method of {N}ehari manifold.
\newblock In {\em Handbook of nonconvex analysis and applications}, pages
  597--632. Int. Press, Somerville, MA, 2010.

\bibitem[Ura00]{MR1782338}
Hajime Urakawa.
\newblock The spectrum of an infinite graph.
\newblock {\em Canad. J. Math.}, 52(5):1057--1084, 2000.

\bibitem[Wil96]{MR1400007}
Michel Willem.
\newblock {\em Minimax theorems}, volume~24 of {\em Progress in Nonlinear
  Differential Equations and their Applications}.
\newblock Birkh\"{a}user Boston, Inc., Boston, MA, 1996.

\bibitem[Woe00]{MR1743100}
Wolfgang Woess.
\newblock {\em Random walks on infinite graphs and groups}, volume 138 of {\em
  Cambridge Tracts in Mathematics}.
\newblock Cambridge University Press, Cambridge, 2000.

\bibitem[XWYS85]{MR840880}
Daoxing Xia, Zhuo~Ren Wu, Shao~Zong Yan, and Wu~Chang Shu.
\newblock {\em Shibian hanshu lun yu fanhan fenxi. {V}ol. {II}}.
\newblock Gaodeng Xuexiao Jiaocai. [Educational Materials for Advanced
  Schools]. Renmin Jiaoyu Chubanshe, Beijing, second edition, 1985.

\bibitem[Yan12]{MR2873855}
Yunyan Yang.
\newblock Existence of positive solutions to quasi-linear elliptic equations
  with exponential growth in the whole {E}uclidean space.
\newblock {\em J. Funct. Anal.}, 262(4):1679--1704, 2012.

\bibitem[ZZ18]{MR3833747}
Ning Zhang and Liang Zhao.
\newblock Convergence of ground state solutions for nonlinear {S}chr\"{o}dinger
  equations on graphs.
\newblock {\em Sci. China Math.}, 61(8):1481--1494, 2018.

\end{thebibliography}
\bibliographystyle{alpha}

\end{document}